\newcommand{\fO}{\mathcal{O}}
\newcommand{\fP}{\mathcal{P}}
\newcommand{\rt}{\mathrm{T}}
\newcommand{\rb}{\mathrm{B}}
\newcommand{\rH}{\mathrm{H}}
\newcommand{\rV}{\mathrm{V}}
\newcommand{\ex}{\mathrm{ex}}
\newcommand{\sat}{\mathrm{sat}}
\newcommand{\zeromat}{\mathbf{0}}
\newcommand{\Iprimetwomat}{\begin{smallbulletmatrix}&\o\\\o&\end{smallbulletmatrix}}
\newenvironment{smallbulletmatrix}{%
	\renewcommand{\o}{\bullet}
	\renewcommand{\d}{\cdot}
	\renewcommand{\r}{\textcolor{red}{\o}}
	\renewcommand{\b}{\textcolor{blue}{\o}}

	\left(\begin{smallmatrix}%
}{%
	\end{smallmatrix}\right)%
}
\newtheorem{theorem}{Theorem}[section]
\newtheorem{lemma}[theorem]{Lemma}
\newtheorem{proposition}[theorem]{Proposition}
\newtheorem*{openQuestion}{Open Question}
\title{Matrix patterns with bounded saturation function}
\author{Benjamin Aram Berendsohn\thanks{Institut f\"ur Informatik, Freie Universit\"at Berlin, \texttt{beab@zedat.fu-berlin.de}. Work supported by DFG grant KO 6140/1-1.}}
\begin{document}
	
\maketitle

\begin{abstract}
	A 0-1 matrix $M$ \emph{contains} a 0-1 matrix \emph{pattern} $P$ if we can obtain $P$ from $M$ by deleting rows and/or columns and turning arbitrary 1-entries into 0s.
	The saturation function $\sat(P,n)$ for a 0-1 matrix pattern $P$ indicates the minimum number of 1s in a $n \times n$ 0-1 matrix that does not contain $P$, but changing any 0-entry into a 1-entry creates an occurrence of $P$. Fulek and Keszegh recently showed that the saturation function is either bounded or in $\Theta(n)$. Building on their results, we find a large class of patterns with bounded saturation function, including both infinitely many permutation matrices and infinitely many non-permutation matrices.
\end{abstract}

\section{Introduction}

In this paper, all matrices are  0-1 matrices. For a cleaner presentation, we write matrices with dots ($\begin{smallmatrix}\bullet\end{smallmatrix}$) instead of 1s and spaces instead of 0s, for example:
\begin{align*}
\left(
\begin{smallmatrix}
	0&1&0\\
	0&0&1\\
	1&0&0
\end{smallmatrix}\right)
=
\begin{smallbulletmatrix}
	  &\o&  \\
	  &  &\o\\
	\o&  &  
\end{smallbulletmatrix}
\end{align*}
In line with this notation, we call a row or column \emph{empty} if it only contains 0s. Furthermore, we refer to changing an entry from 0 to 1 as \emph{adding} a 1-entry, and to the reverse as \emph{removing} a 1-entry.

We index matrices as follows: The entry $(i,j)$ is in the $i$-th row (from top to bottom) and the $j$-th column (from left to right). For example, the above matrix has 1-entries $(1,2)$, $(2,3)$ and $(3,1)$.

A \emph{pattern} is a matrix that is not all-zero. A matrix $M$ \emph{contains} a pattern $P$ if we can obtain $P$ from $M$ by deleting rows and/or columns, and turning arbitrary 1-entries into 0s. If $M$ does not contain $P$, we say $M$ \emph{avoids} $P$.
Matrix pattern avoidance can be seen as a generalization of two other areas in extremal combinatorics: Pattern avoidance in permutations (see, e.g., Vatter's survey \cite{Vatter2014}), which corresponds to the case where both $M$ and $P$ are permutation matrices; and forbidden subgraphs in bipartite graphs, which corresponds to avoiding a pattern $P$ and all other patterns obtained from $P$ by permutation of rows and/or columns.\footnote{For this, we interpret the $M$ and $P$ as adjacency matrices of bipartite graphs.}

A classical question in extremal graph theory is to determine the minimum number of edges in a $n$-vertex graph avoiding a fixed pattern graph $H$. The corresponding problem in forbidden submatrix theory is determining the maximum \emph{weight} (number of 1s) of a $m \times n$ matrix avoiding the pattern $P$, denoted by $\ex(P, m, n)$. We call $\ex(P,n) = \ex(P,n,n)$ the \emph{extremal function} of the pattern $P$. The study of the extremal function originates in its applications to (computational) geometry \cite{Mitchell1987,Fueredi1990,BienstockGyoeri1991}. A systematic study initiated by Füredi and Hajnal \cite{FuerediHajnal1992} has produced numerous results \cite{Klazar2000,Klazar2001,MarcusTardos2004,Tardos2005,Keszegh2009,Fulek2009,Geneson2009,Pettie2011,Pettie2011a} and further applications in the analysis of algorithms have been discovered \cite{Pettie2010,ChalermsookEtAl2015}.


Clearly, for non-trivial patterns, $\ex(P,n)$ is at least linear and at most quadratic. Large classes of patterns with linear and quasi-linear extremal functions have been identified \cite{Keszegh2009,Pettie2011}. On the other hand, there are patterns with nearly quadratic extremal functions~\cite{AlonEtAl1999}.

A natural counterpart to the extremal problem is the \emph{saturation problem}. A matrix is \emph{saturated} for a pattern $P$ if it avoids $P$ and is maximal in this respect, i.e., turning any 0-entry of $M$ into a 1 creates an occurrence of $P$. Clearly, $\ex(P,m,n)$ can also be defined as the maximum weight of a $m \times n$ matrix that is saturated for $P$. The function $\sat(P,m,n)$ indicates the \emph{minimum} weight of a $m \times n$ matrix that is saturating for $P$. We focus on square matrices and the \emph{saturation function} $\sat(P,n) = \sat(P,n,n)$.

The saturation problem for matrix patterns was first considered by Brualdi and Cao \cite{BrualdiCao2020} as a counterpart of saturation problems in graph theory. Fulek and Keszegh \cite{FulekKeszegh2020} started a systematic study. They proved that, perhaps surprisingly, every pattern $P$ satisfies $\sat(P,n) \in \fO(1)$ or $\sat(P,n) \in \Theta(n)$. This is in stark contrast to the extremal problem. Further, they present large classes of patterns with linear saturation functions, and a single non-trivial pattern with bounded saturation function.

Most interesting for our purposes is a class of patterns we call \emph{once-separable}: A pattern is once-separable if it has the form
\begin{align*}
	\begin{pmatrix} A & \zeromat \\ \zeromat & B \end{pmatrix} \text{ or } \begin{pmatrix} \zeromat & A \\ B & \zeromat \end{pmatrix}
\end{align*}
for two patterns $A$ and $B$, where $\zeromat$ denotes an all-0 matrix of arbitrary dimensions.
\begin{theorem}[{\cite[Theorem 1.7]{FulekKeszegh2020}}]\label{p:linsat-once-sep}
	If $P$ is once-separable, then $\sat(P,n) \in \Theta(n)$.
\end{theorem}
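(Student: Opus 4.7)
The plan is to leverage the Fulek--Keszegh dichotomy $\sat(P,n) \in O(1) \cup \Theta(n)$: since an unbounded saturation function is automatically $\Theta(n)$, it suffices to rule out $\sat(P,n) \in O(1)$ for once-separable $P$. By horizontal reflection I may assume $P = \begin{pmatrix} A & \zeromat \\ \zeromat & B \end{pmatrix}$, with $A$ of size $a \times b$ and $B$ of size $c \times d$. For readability I would also reduce to the case where $A$ and $B$ have no empty rows or columns; this reduction requires a short argument about how empty rows of $A$ can be matched by arbitrary rows of $M$ (with a bit of care about the extremal positions of the occurrence).

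Next, I would assume for contradiction that $\sat(P, n) \le K$ for some constant $K$ and arbitrarily large $n$, and let $M$ be such a saturated matrix with non-empty row-set $R$ and non-empty column-set $C$ of sizes at most $K$. The crucial observation is that any occurrence of $P$ uses only non-empty rows of $M$ for the $A$- and $B$-blocks, and similarly non-empty columns. Hence when a 1 is added at a 0-entry $(i,j)$ with $i \notin R$, $j \notin C$ and an occurrence of $P$ emerges, the new 1 plays either the role of an $A$-entry (requiring $\ge c$ non-empty rows below $i$ and $\ge d$ non-empty columns right of $j$) or the role of a $B$-entry (requiring $\ge a$ non-empty rows above $i$ and $\ge b$ non-empty columns left of $j$).

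The main step is to locate a 0-entry $(i, j)$ at which both roles fail. The natural attempt places $i$ below the $(|R|-c+1)$-th element of $R$ (killing the row count for an $A$-entry) and $j$ to the left of the $b$-th element of $C$ (killing the column count for a $B$-entry); this yields a contradiction unless the bottom $c$ rows of $M$ lie entirely in $R$ or the left $b$ columns lie entirely in $C$. Three symmetric placements of $(i,j)$ near the remaining corners of $M$ handle the other obstructions, leaving only the residual scenario in which all four boundary bands (top $a$ rows, bottom $c$ rows, left $b$ columns, right $d$ columns) are fully occupied by $R$ or $C$.

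The main obstacle is this \emph{boundary-dense} residual case, in which counting alone cannot rule out both roles simultaneously. My plan here is to exploit pattern matching on the bounded-size core matrix $M[R, C]$: this core avoids $P$, but saturation forces that every insertion of a new row-column pair with a single 1 at the intersection (corresponding to a choice of $(i,j)$ in one of the middle bands) produces $P$. Since the set of such insertion positions is abundant for $n$ large, the combined constraints should force $M[R, C]$ itself to contain $A$ in its top-left and $B$ in its bottom-right with compatible separations, hence a full copy of $P$ — contradicting $P$-avoidance. Making this last pigeonhole step precise, perhaps by an induction on $|A| + |B|$ or by directly tracking which rows of $R$ can play the role of which rows of $A$, is where I expect the proof to demand the most care.
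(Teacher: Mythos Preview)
This theorem is quoted from Fulek and Keszegh and is not proved in the present paper, so there is no proof here to compare against. That said, your proposal has a genuine gap and is more complicated than necessary.

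The gap is precisely the ``boundary-dense'' residual case. Your plan there (``the combined constraints should force $M[R,C]$ itself to contain $A$ in its top-left and $B$ in its bottom-right'') is only a hope, not an argument; you have not explained what mechanism forces the $A$- and $B$-copies to land in \emph{compatible} row and column ranges. Without that, nothing contradicts $P$-avoidance.

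The fix also removes the case analysis entirely. Instead of hunting for a single cell $(i,j)$ with $i\notin R$, $j\notin C$ at which both roles fail, fix any empty row $i$ (which exists once $n>K$) and sweep over all columns. For each $j$ set
\[
f(j)=\bigl[A \text{ occurs in } M[\text{rows}<i,\ \text{cols}<j]\bigr],\qquad
g(j)=\bigl[B \text{ occurs in } M[\text{rows}>i,\ \text{cols}>j]\bigr].
\]
If the new $1$ at $(i,j)$ plays an $A$-role then the entire $B$-block of that occurrence lies in rows $>i$ and columns $>j$, so $g(j)=1$; if it plays a $B$-role then $f(j)=1$. Hence saturation gives $f(j)\lor g(j)$ for every $j$. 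Since $f$ is monotone non-decreasing with $f(1)=0$ and $g$ is monotone non-increasing with $g(n)=0$, there is a threshold $j^\ast$ with $f(j^\ast)=0$, $f(j^\ast+1)=1$, and $g(j^\ast)=1$. Then $A$ occurs in rows $<i$, columns $\le j^\ast$, while $B$ occurs in rows $>i$, columns $>j^\ast$; together these form an occurrence of $P=\mathrm{diag}(A,B)$ in $M$, contradicting avoidance. Thus a saturated matrix for a once-separable $P$ has no empty row (and, symmetrically, no empty column), so its weight is at least $n$, and the dichotomy finishes the job. This is essentially the intuition you gestured at in the residual case, but carried out along a single empty row it handles everything at once and needs no corner placements or induction.
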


In this paper, for the sake of simplicity, we only consider patterns with no empty rows or columns. However, we note that the saturation function, unlike the extremal function, may change considerably by the addition of an empty row or column. In particular, Fulek and Keszegh proved that if the first or last row or column of a pattern $P$ is empty, then $\sat(P,n) \in \Theta(n)$.

Note that if $P'$ can be obtained from $P$ by rotation, inversion\footnote{Swapping the role of rows and columns.}, or reflection\footnote{Reversing all rows or all columns.}, then $\sat(P,n) = \sat(P',n)$.

\paragraph{Permutation matrix patterns.}

In this paper, we give special attention to \emph{permutation matrix} patterns. A permutation matrix is a square matrix where every row and every column contains exactly one 1-entry. \Cref{p:linsat-once-sep} already covers the once-separable permutation matrices.

We call the 1-entries in the first or last row or column the \emph{outer} 1-entries. It is easy to see that a not-once-separable permutation matrix cannot have a 1-entry in one of its corners. As such, up to reflection, the outer four 1-entries form one of the patterns $Q_0$ and $Q_1$, where
\begin{align*}
	Q_0 = \begin{smallbulletmatrix}
		  &\o&  &  \\
		\o&  &  &  \\
		  &  &  &\o\\
		  &  &\o&  
	\end{smallbulletmatrix},\hspace{10mm}
	Q_1 = \begin{smallbulletmatrix}
		  &  &\o&  \\
		\o&  &  &  \\
		  &  &  &\o\\
		  &\o&  &  
	\end{smallbulletmatrix}.
\end{align*}

In particular, all $3 \times 3$ permutation matrices are once-separable, and $Q_1$ is the only $4 \times 4$ pattern that is not once-separable. The $5 \times 5$ not-once-separable matrices are shown in \Cref{fig:small-perm-pats}. Fulek and Keszegh already proved that $Q_2$ has bounded saturation function, and ask whether the same is true for $Q_1$.

Call a permutation matrix \emph{$Q_0$-like} (\emph{$Q_1$-like}) if the outer 1-entries form $Q_0$ (respectively, $Q_1$). We prove that all $Q_1$-like permutation matrices have bounded saturation function.
\begin{theorem}\label{p:q1-like-perm}
	Let $P$ be a $Q_1$-like $k \times k$ permutation matrix. Then $\sat(P,n) \in \fO(1)$.
\end{theorem}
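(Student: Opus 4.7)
The plan is to exhibit, for every sufficiently large $n$, an explicit $n \times n$ matrix $M_n$ with a bounded number (depending only on $k$) of 1-entries that is saturating for $P$; this gives $\sat(P,n) \in \fO(1)$ directly. This mirrors the route Fulek and Keszegh took for their single example $Q_2$: construct, then verify avoidance, then verify saturation by casework on the position of the added 1-entry.

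I would exploit the $Q_1$-like structure of $P$ directly. Write the four outer entries of $P$ as $(1,a)$, $(k,b)$, $(\ell,1)$, $(r,k)$; the $Q_1$-like hypothesis gives $a>b$ and $\ell<r$, so in particular none of the outer entries of $P$ sits in a corner. The core of $M_n$ would consist of several overlapping ``near-copies'' of $P$ --- each missing exactly one 1-entry --- arranged so that for every cell of the $n \times n$ grid, at least one near-copy's missing entry can be placed in that cell. The near-copies share most of their entries, keeping the total weight bounded, and are supplemented with a few auxiliary entries near each of the four corners to handle cells in the corner regions.

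Verification splits into two parts. For avoidance, I would show that $M_n$ contains no copy of $P$, using a carefully chosen structural property of the core --- for instance, insufficient vertical or horizontal spread, or the deliberate omission of a single inner entry that would be needed to link the core to any of the corner auxiliaries. For saturation, I would perform casework on where a newly added 1-entry $(i,j)$ lies: cells in the first or last row or column can play one of the four outer roles of $P$; cells in the interior play inner roles and are completed by the central portion of the core. The $Q_1$-crossing is what provides the flexibility for outer roles --- a cell $(i,j)$ in the upper half of the grid can be cast as either the top-row outer entry of $P$ or one of the side-column outer entries, depending on which core entries are used to complete the remaining $k-1$ positions.

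The main obstacle is covering the four corner regions simultaneously with maintaining avoidance. A cell very near, say, the top-left corner must play a role in $P$ whose other $k-1$ core entries lie mostly below-and-to-the-right of it; the auxiliary entries needed to support this must be placed close to the top-left corner without conspiring with the central core (or with the auxiliaries near the other three corners) to form an unintended $P$-occurrence. This is where the $Q_1$-like hypothesis really earns its keep: the absence of corner entries in $P$ gives precisely the slack to place corner auxiliaries safely, and the $Q_1$ crossing (as opposed to the $Q_0$ pattern) ensures the outer roles are matched to corners in a compatible way. Making the construction and the attendant case analysis go through uniformly for every $Q_1$-like $P$, not just for $Q_1$ itself, is the crux of the proof.
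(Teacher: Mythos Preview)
Your proposal is not a proof but a plan, and the plan leaves exactly the hard part unspecified. You describe the intended $M_n$ only in terms of desiderata (``overlapping near-copies of $P$'', ``a few auxiliary entries near each of the four corners'', avoidance via ``a carefully chosen structural property''), and you close by saying that making the construction and case analysis go through uniformly ``is the crux of the proof''. That crux is never addressed. In particular, handling all four corner regions of an $n\times n$ board simultaneously while keeping $M_n$ $P$-free is precisely where a direct construction tends to break down, and nothing you wrote explains why the $Q_1$-crossing hypothesis (as opposed to $Q_0$) resolves it.

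The paper sidesteps this difficulty entirely by a decomposition you do not use. Rather than building a single saturating $n\times n$ matrix, it constructs a \emph{vertical witness} $W(P)$: take two copies of $P$, align them so that the rightmost 1-entry of the first coincides with the leftmost 1-entry of the second, and delete that shared column. The resulting matrix has one empty (expandable) row, and the $Q_1$-like hypothesis gives a two-line row-counting argument that $W(P)$ avoids $P$ (any putative occurrence is trapped in $k$ consecutive rows, one of which is the empty row). Since the class of $Q_1$-like patterns is closed under rotation, rotating yields a horizontal witness; a general lemma then glues a vertical and a horizontal witness into a block-diagonal matrix that is a full witness for $P$. So the key idea you are missing is that one expandable row and one expandable column can be produced \emph{separately} by very small, explicit matrices, and combined afterwards---there is no need for corner auxiliaries or a global $n\times n$ construction at all.
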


This covers the pattern $Q_1$ (thus answering the question of Fulek and Keszegh) and the patterns $Q_2$, $Q_3$, and $Q_4$ in \Cref{fig:small-perm-pats}. For permutation matrices of size at most 6, we obtain a full characterization of the saturation functions with the following theorem.

\begin{restatable}{theorem}{restateSmallPatterns}\label{p:small-patterns}
	Let $P$ be a not-once-separable $k \times k$ permutation matrix with $k \le 6$. Then $\sat(P,n) \in \fO(1)$.
\end{restatable}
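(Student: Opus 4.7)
The plan is to reduce to a finite case analysis and then exhibit explicit bounded-weight saturated constructions. By \Cref{p:q1-like-perm}, every $Q_1$-like permutation matrix has bounded saturation function, so only $Q_0$-like patterns require new treatment. All $3 \times 3$ permutation matrices are once-separable, $Q_1$ is the only not-once-separable $4 \times 4$ permutation matrix (and it is $Q_1$-like), and $Q_0$ itself is once-separable; so the genuinely new cases are the $Q_0$-like not-once-separable permutation matrices of sizes $5$ and $6$.

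First I would enumerate these patterns up to the symmetries preserving $\sat$ (rotations, reflections, and transposition). A $Q_0$-like pattern is determined by placing its four outer $1$-entries in the canonical $Q_0$ configuration and then specifying the positions of the $k-4$ inner $1$-entries; the not-once-separable condition forces at least one inner entry to cross the NW/SE decomposition supplied by the outer $Q_0$. A direct enumeration shows that, up to symmetry, there is exactly one such pattern for $k=5$ (the permutation with $1$-entries at $(1,3),(2,1),(3,5),(4,2),(5,4)$, whose single inner entry sits in the ``SW'' quadrant of the outer $Q_0$), and a short list of orbits for $k=6$.

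For each pattern $P$ on this list I would construct, for all sufficiently large $n$, an $n \times n$ matrix $M_n$ of bounded weight that is saturated for $P$. Following the template used in the proof of \Cref{p:q1-like-perm}, the construction places a small fixed gadget of $1$-entries in a constant-size region of $M_n$, chosen so that (a) no choice of $k$ rows and $k$ columns restricts $M_n$ to a copy of $P$, and (b) flipping any $0$-entry of $M_n$ creates a copy of $P$. The placement of the gadget entries must be tailored to the specific inner structure of $P$, so that the inner entries of $P$ can always be matched against gadget entries whenever a flipped $0$-entry plays the role of an outer entry.

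The main obstacle is the construction step rather than the enumeration. The $Q_0$-like patterns at $k=5,6$ do not obviously admit a single uniform construction comparable to the one available for $Q_1$-like patterns, so I expect each orbit to require its own gadget together with a separate verification of both avoidance and saturation. The verification of saturation is the most delicate part: for every $0$-entry of $M_n$, one must exhibit a choice of $k$ rows and $k$ columns showing that flipping that entry creates an occurrence of $P$, and one must do so simultaneously for entries near the gadget (where the argument is ad hoc) and entries far from it (where a periodic pattern can typically be used).
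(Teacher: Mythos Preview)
Your reduction to the $Q_0$-like patterns of sizes $5$ and $6$ is correct, and the enumeration is sound (indeed $Q_5$ is the unique such $5\times 5$ pattern up to symmetry). However, the route you propose is both harder and less structured than the paper's.

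The paper never verifies saturation of an explicit $n\times n$ matrix directly. Instead, it works through the witness machinery of \Cref{sec:wit}: by \Cref{p:vert_hor_wit} and \Cref{p:vert_wit_suff}, since the class of $Q_0$-like permutation matrices is closed under inversion, it suffices to exhibit for each such $P$ a single fixed-size \emph{vertical witness}, namely a matrix that avoids $P$ and has one expandable empty row. No saturation check is needed; only an avoidance check on a small matrix.

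Moreover, the paper does not build a bespoke gadget per pattern. It uses one uniform construction $W(P)$ (two partial copies of $P$ glued along their left/right columns, with the shared column deleted), and proves a single structural lemma (\Cref{p:almost-q1}): if $P$ contains an occurrence of $\Iprimetwomat$ of height $k-1$, then $W(P)$ avoids $P$ and is therefore a vertical witness. This criterion dispatches $Q_5$ and all but four of the $6\times 6$ $Q_0$-like permutation matrices in one stroke. Only $Q_6,Q_7,Q_8,Q_9$ require individual treatment, and even then $W(Q_i)$ already works for $i\in\{6,7,8\}$; only $Q_9$ needs a slight modification of $W(Q_9)$.

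So your plan is viable in principle but replaces one structural avoidance argument plus four small ad hoc checks with roughly seventeen independent constructions, each accompanied by a full saturation verification (which, as you correctly note, is the delicate part). The paper's approach buys you exactly the elimination of that delicate part: the expandable-row property of $W(P)$ is automatic from its definition, so the only thing left to check in each case is that $W(P)$ avoids $P$.
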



\begin{figure}[tbp]
	\begin{align*}
		Q_2 = \begin{smallbulletmatrix}
			  &  &  &\o&  \\
			\o&  &  &  &  \\
			  &  &\o&  &  \\
			  &  &  &  &\o\\
			  &\o&  &  &  
		\end{smallbulletmatrix},
		Q_3 = \begin{smallbulletmatrix}
			  &  &  &\o&  \\
			\o&  &  &  &  \\
			  &\o&  &  &  \\
			  &  &  &  &\o\\
			  &  &\o&  &  
		\end{smallbulletmatrix},
		Q_4 = \begin{smallbulletmatrix}
			  &  &  &\o&  \\
			\o&  &  &  &  \\
			  &  &  &  &\o\\
			  &\o&  &  &  \\
			  &  &\o&  &  
		\end{smallbulletmatrix},
		Q_5 = \begin{smallbulletmatrix}
			  &  &\o&  &  \\
			\o&  &  &  &  \\
			  &  &  &  &\o\\
			  &\o&  &  &  \\
			  &  &  &\o&  
		\end{smallbulletmatrix}
	\end{align*}
	\caption{$5 \times 5$ permutation matrices with bounded saturation function, up to rotation and reflection.}\label{fig:small-perm-pats}
\end{figure}

\paragraph{Other patterns.} We call a pattern \emph{non-trivial} if it has two rows that only have a 1 in the leftmost (respectively rightmost) position, and two columns which only have a 1 in the topmost (respectively bottommost) position. Otherwise, we call the pattern \emph{trivial}. Fulek and Keszegh show that each trivial pattern has linear saturation function~\cite[Theorem 1.11]{FulekKeszegh2020}. Note that every permutation matrix is non-trivial.

\begin{figure}[htbp]
	\begin{align*}
		\begin{smallbulletmatrix}
			  &  &\o&  \\
			\o&  &  &  \\
			  &  &  &\o\\
			\o&\o&  &  
		\end{smallbulletmatrix},\hspace{10mm}
		\begin{smallbulletmatrix}
			  &  &\o&  \\
			\o&  &  &  \\
			  &  &  &\o\\
			  &\o&\o&  
		\end{smallbulletmatrix}.
	\end{align*}
	\caption{A non-trivial pattern (left), and a trivial pattern (right).}
\end{figure}

Our techniques easily generalize to a more general class of non-trivial patterns (in fact, we only prove them in the general form). We restrict ourselves to the patterns without empty rows or columns where the first and last row and column each contain only a single 1-entry. Since the case of once-separable patterns is already solved, this again leaves us with patterns where the outer 1-entries form either $Q_0$ or $Q_1$ (up to reflection). We extend our previous definitions as follows: An arbitrary pattern is called \emph{$Q_0$-like} (\emph{$Q_1$-like}) if it has no empty rows and columns, and exactly four outer entries that form an occurrence of $Q_0$ (respectively, $Q_1$). We prove a generalization of \Cref{p:q1-like-perm}.

\begin{restatable}{theorem}{restateQOneLike}\label{p:q1-like}
	Let $P$ be a non-trivial $Q_1$-like $k \times k$ pattern. Then $\sat(P,n) \in \fO(1)$.
\end{restatable}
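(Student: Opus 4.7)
The plan is to exhibit, for each sufficiently large $n$, an $n \times n$ matrix $M_n$ with a bounded number of 1-entries (depending on $P$ but not on $n$) that avoids $P$ and becomes a supercontainment of $P$ after changing any single $0$ into a $1$. Combined with the trivial bound $\sat(P,n) \le n^2$ for the remaining finitely many values of $n$, this yields $\sat(P,n) \in \fO(1)$. Write the outer 1-entries of $P$ as $(1, c_T)$, $(r_L, 1)$, $(r_R, k)$, $(k, c_B)$ with $c_B < c_T$ and $r_L < r_R$; by $Q_1$-likeness (which requires exactly four outer entries) together with non-triviality, rows $r_L, r_R$ and columns $c_T, c_B$ each contain a single 1-entry of $P$, and the interior of $P$ consists of only $\fO(1)$ further 1-entries.

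I would construct $M_n$ by placing $\fO(1)$ 1-entries in a few \emph{clusters} of constant size positioned near the corners of $M_n$. Each cluster is intended to encode a near-copy of $P$ (a copy missing one designated entry) spread over the corner, so that an added 1 at an appropriate position completes one of these near-copies to a full $P$-occurrence. The $Q_1$-like structure naturally supports this because both pairs of outer entries (top/bottom and left/right) cross, so every position $(i, j)$ of $M_n$ has clusters both above-left and below-right, and likewise above-right and below-left. An added 1 at $(i, j)$ then either plays an outer role of $P$, with all remaining entries supplied by a single corner cluster, or an interior role, with the outer entries of $P$ drawn from two opposing clusters. The interior of $P$ fits comfortably inside each cluster since it has only $\fO(1)$ entries.

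The verification proceeds in two steps. First, I would show that $M_n$ itself avoids $P$: using the crossing property of $Q_1$, any purported $P$-occurrence in $M_n$ must distribute its four outer entries between two opposing clusters, and a careful placement rules out any assignment consistent with the interior of $P$. Second, I would do a case analysis over the position of the added 1-entry, identifying in each case which role the new 1 plays and which cluster(s) supply the remaining $k-1$ entries of $P$.

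The main obstacle is the joint design of the clusters: their placements and contents must simultaneously avoid $P$ (the avoidance requirement) and enable completion by every $0$-entry (the saturation requirement), and these demands pull in opposite directions. Non-triviality is essential for the saturation side: because rows $r_L, r_R$ and columns $c_T, c_B$ of $P$ each contain only a single outer 1-entry, these rows and columns in a $P$-occurrence can be realised inside a cluster by a single 1-entry without interfering with the other roles assigned to that cluster, which is what keeps the cluster sizes constant. Without this property the clusters would have to grow to accommodate the interference, and indeed Fulek and Keszegh have shown that trivial patterns can have linear saturation function.
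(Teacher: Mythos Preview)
Your proposal is a plan, not a proof. You describe an intended construction (``clusters near the corners, each a near-copy of $P$'') and an intended verification (``a careful placement rules out any assignment'', ``a case analysis''), but you carry out neither. You even flag this yourself: ``The main obstacle is the joint design of the clusters.'' That obstacle is the entire content of the theorem, and it is not addressed.

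There is also a concrete technical gap in the sketch as written. Suppose the added $1$ at $(i,j)$ lies far from every cluster and is meant to play an \emph{interior} role of $P$. You say the four outer entries of $P$ are supplied by two opposing clusters; but $P$ may have several further interior $1$-entries, and these must be realised in rows and columns strictly between the mapped outer entries \emph{and} in the correct relative position to $(i,j)$. Nothing in your description places any $1$-entries outside the corner clusters, so for a generic interior $(i,j)$ there is nothing available to realise the remaining interior entries of $P$. Your sentence ``the interior of $P$ fits comfortably inside each cluster'' only helps when the whole occurrence lives in one cluster; it does not help in the two-cluster case you rely on. Similarly, on the avoidance side, several near-copies of $P$ scattered in the matrix are exactly the configuration most likely to \emph{contain} $P$ by mixing entries from different clusters; asserting that ``careful placement'' prevents this is not an argument.

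The paper avoids all of this by working with the \emph{witness} notion rather than constructing a saturated matrix directly. One first shows (Lemma~\ref{p:vert_hor_wit}) that it suffices to produce a vertical witness and a horizontal witness separately; since the class of non-trivial $Q_1$-like patterns is closed under rotation, a vertical witness for every such $P$ suffices (Lemma~\ref{p:vert_wit_suff}). The vertical witness is the explicit matrix $W(P)$ of Section~\ref{sec:wit-constr}: two copies of $P$ glued so that the rightmost entry of the first coincides with the leftmost entry of the second, with the shared column deleted. This leaves exactly one empty row, and one checks (Lemma~\ref{p:wp_vert_wit}) that any $1$ added in that row completes one of the two copies. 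The $Q_1$-like hypothesis is used only in Lemma~\ref{p:wp-q1-like}, a short row-counting argument showing $W(P)$ avoids $P$. This route is both concrete and short; your corner-cluster idea, even if it can be made to work, would require substantially more bookkeeping.
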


We prove \Cref{p:q1-like} (which implies \Cref{p:q1-like-perm}) and \Cref{p:small-patterns} in \Cref{sec:wit-constr}. All our results are based on the construction of a \emph{witness}, a concept introduced by Fulek and Keszegh. In \Cref{sec:wit}, we formalize and develop this notion, based on the proof by Fulek and Keszegh that $Q_2$ has bounded saturation function.

\section{Witnesses}\label{sec:wit}

Let $P$ be a matrix pattern without empty rows or columns. An \emph{explicit witness} (called simply \emph{witness} by Fulek and Keszegh~\cite{FulekKeszegh2020}) for $P$ is a matrix $M$ that is saturated for $P$ and contains at least one empty row and at least one empty column. Clearly, if $\sat(P,n) \in \fO(1)$, then $P$ has an explicit witness. Fulek and Keszegh note that the reverse is also true: We can replace an empty row (column) by an arbitrary number of empty rows (columns), and the resulting arbitrarily large matrix will still be saturating for $P$.\footnote{Note that it is critical here that $P$ has no empty rows or columns. Otherwise,  increasing the number of empty rows or columns in $M$ might create an occurrence of $P$.} As such, an $m_0 \times n_0$ explicit witness for $P$ of weight $w$ implies that $\sat(P,m,n) \le w$ for each $m \ge m_0$ and $n \ge n_0$.

We call a row (column) of a matrix $M$ \emph{expandable} w.r.t.\ $P$ if the row (column) is empty and adding a single 1-entry anywhere in that row (column) creates a new occurrence of $P$ in $M$. A explicit witness for $P$ is thus a saturated matrix with at least one expandable row and an expandable column w.r.t.\ $P$. We define a \emph{witness} for $P$ (used implicitly by Fulek and Keszegh) as a matrix that avoids $P$ and has at least one expandable row and at least one expandable column w.r.t.\ $P$. Clearly, an explicit witness is a witness. The following lemma shows that finding a (general) witness is sufficient to show that $\sat(P,n) \in \fO(1)$.

\begin{lemma}\label{p:gwitness}
	If a pattern $P$ without empty rows or columns has a $m_0 \times n_0$ witness, then $P$ has a $m_0 \times n_0$ explicit witness.
\end{lemma}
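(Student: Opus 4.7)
The plan is to take the witness $M$ and greedily saturate it, i.e., repeatedly add any $1$-entry that does not create an occurrence of $P$, until no further addition is possible. Let $M^*$ denote the resulting $m_0 \times n_0$ matrix. By construction $M^*$ avoids $P$ and is saturated for $P$, so it remains only to show that $M^*$ still has an empty row and an empty column (which are then automatically expandable in $M^*$, yielding an explicit witness of the same dimensions).

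The key tool is a simple monotonicity observation: if $M \subseteq M'$ (in the sense that every $1$ of $M$ is also a $1$ of $M'$) and $M$ contains $P$, then so does $M'$. This is immediate because any occurrence of $P$ in $M$ is witnessed by a set of $1$-entries that still exists in $M'$. Let $i$ be an expandable row and $j$ an expandable column of the given witness $M$. I would use the observation twice. First, throughout the greedy process the current matrix contains $M$; hence for every column $c$, adding a $1$ at $(i,c)$ creates an occurrence of $P$ (it did so starting from $M$, and monotonicity preserves this). Therefore the greedy process never places a $1$ in row $i$, and row $i$ remains empty in $M^*$. By the same argument applied to column $j$, column $j$ remains empty in $M^*$. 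Second, since $M \subseteq M^*$, the expandability of row $i$ and column $j$ transfers from $M$ to $M^*$: adding a $1$ at any position in row $i$ or column $j$ of $M^*$ creates an occurrence of $P$, because such an occurrence already arose from $M$ plus that entry.

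Combining these points, $M^*$ is an $m_0 \times n_0$ matrix that avoids $P$, is saturated for $P$, and has an empty (hence expandable) row $i$ and an empty (hence expandable) column $j$. This is exactly an explicit witness for $P$ of the desired dimensions, completing the proof. I do not anticipate a real obstacle here; the only thing to be careful about is to phrase the monotonicity argument precisely so that it covers both that row $i$ and column $j$ are never touched during saturation and that they remain expandable at the end.
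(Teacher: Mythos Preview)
Your proof is correct and follows essentially the same approach as the paper: greedily add $1$-entries until the matrix is saturated, observing that no entry can ever be added inside an expandable row or column (the paper phrases this as ``$(i,j)$ cannot be contained in an expandable row or column of $M$, so the resulting matrix is still a witness''). Your monotonicity argument simply makes explicit why that observation holds.
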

\begin{proof}
	Let $M$ be a $m_0 \times n_0$ witness for $P$. If $M$ is saturating for $P$, then we are done. Otherwise, there must be a 0-entry $(i,j)$ in $M$ that can be changed to 1 without creating an occurrence $P$. Note that $(i,j)$ cannot be contained in an expandable row or column of $M$, so the resulting matrix is still a witness. Thus, we obtain an explicit witness after repeating this step at most $m_0 \cdot n_0$ times.
\end{proof}

\subsection{Saturating matrices with constant width or height}

Fulek and Keszegh also considered the asymptotic behavior of the functions $\sat(P,m_0,n)$ and $\sat(P,m,n_0)$, where $m_0$ and $n_0$ are fixed. The dichotomy of $\sat(P,n)$ also holds in this setting:

\begin{theorem}[{\cite[Parts of Theorem 1.3]{FulekKeszegh2020}}]\label{p:dichotomy}
	For every pattern $P$, and constants $m_0, n_0$,
	\begin{enumerate}[(i)]
		\itemsep0pt
		\item either $\sat(P,m_0,n) \in \fO(1)$ or $\sat(P,m_0,n) \in \Theta(n)$;\label{item:dich_hor}
		\item either $\sat(P,m,n_0) \in \fO(1)$ or $\sat(P,m,n_0) \in \Theta(m)$.\label{item:dich_vert}
	\end{enumerate}
\end{theorem}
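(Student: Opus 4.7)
The two parts are interchanged by transposing matrices and swapping $m$ with $n$, so it suffices to prove (i). The upper bound $\sat(P,m_0,n) \in \fO(n)$ is immediate: any $m_0 \times n$ matrix has weight at most $m_0 n$, and a saturating matrix exists because one can start from the all-zero matrix, which avoids $P$, and greedily add $1$s while $P$ is still avoided. The substance of the theorem is therefore to establish a dichotomy for the lower bound: if $\sat(P,m_0,n) \notin \fO(1)$, then $\sat(P,m_0,n) \in \Omega(n)$.

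The plan is to use an ``empty column inflation'' trick analogous to the explicit witness argument that precedes \Cref{p:gwitness}. Suppose that for some $n_0$ there exists an $m_0 \times n_0$ saturating matrix $M$ for $P$ that contains an empty column $c$. Since $P$ has no empty columns by the standing assumption, I would insert any number of additional all-zero columns at position $c$ to obtain, for every $n \ge n_0$, an $m_0 \times n$ matrix $M'$ of the same weight as $M$ that (a) still avoids $P$, because an occurrence of $P$ in $M'$ would have to use only columns that carry a $1$-entry (forced by $P$ having no empty column) and therefore restricts to an occurrence in $M$; and (b) is still saturating, because a $1$ placed in any of the new empty columns completes exactly the copy of $P$ that was created when the same $1$ was placed in $c$, while a $1$ placed in any non-empty column is handled by the saturation of $M$. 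This yields $\sat(P,m_0,n) \le \sat(P,m_0,n_0)$ for all $n \ge n_0$, so $\sat(P,m_0,n) \in \fO(1)$.

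Contrapositively, if $\sat(P,m_0,n)$ is unbounded, then no $m_0 \times n$ saturating matrix, for any $n$, has an empty column. Every one of its $n$ columns therefore contains at least one $1$, giving $\sat(P,m_0,n) \ge n$, and combined with the $\fO(n)$ upper bound the claim (i) follows. Part (ii) is obtained by applying (i) to the transpose of $P$, using that $\sat(P,m,n_0)$ equals the saturation function of the transposed pattern at $(n_0,m)$. The only genuinely delicate step is the saturation verification inside the inflation argument: one must exclude the possibility that the freshly inserted empty columns, together with existing $1$-entries of $M$, create a copy of $P$ that did not already exist. This is precisely where the ``$P$ has no empty rows or columns'' hypothesis is indispensable, and is the step I would write out most carefully.
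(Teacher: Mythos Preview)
The paper does not give its own proof of this statement; it is quoted verbatim from Fulek and Keszegh and left unproved here. Your reconstruction is correct under the paper's standing assumption that $P$ has no empty rows or columns, and it is essentially the standard argument: the column-inflation step is the one-sided version of the explicit-witness discussion that the paper itself sketches in the paragraph preceding \Cref{p:gwitness}. One small imprecision worth tightening: in part (b) you split into ``new empty columns'' versus ``non-empty columns'', but $M$ may already have several empty columns besides $c$; the clean dichotomy is ``new columns'' versus ``old columns of $M$'', and the old case is handled uniformly because any occurrence of $P$ created in $M$ by the added $1$ cannot use the (still empty) column $c$ and therefore lifts to $M'$ intact.
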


We can adapt the notion of witnesses in order to classify $\sat(P,m_0, n)$ and $\sat(P,m,n_0)$. Let $P$ be a matrix pattern without empty rows or columns. A \emph{horizontal (vertical) witness} for $P$ is a matrix $M$ that avoids $P$ and contains an expandable column (row).\footnote{A horizontal witness can be expanded horizontally, a vertical witness can be expanded vertically.} Clearly, $P$ has a horizontal witness with $m_0$ rows if and only if $\sat(P, m_0, n)$ is bounded; and $P$ has a vertical witness with $n_0$ columns if and only if $\sat(P,m,n_0)$ is bounded. Further note that $M$ is a witness for $P$ if and only if $M$ is horizontal witness and a vertical witness.

Observe that rotation and inversion of $P$ may affect the functions $\sat(P,m,n_0)$ or $\sat(P,m_0,n)$, but reflection does not.

\begin{lemma}\label{p:ext-hor-wit}
	Let $P$ be a matrix pattern without empty rows or columns, and only one entry in the last row (column). Let $W$ be a horizontal (vertical) witness for $P$. Then, appending an empty row (column) to $W$ again yields a horizontal (vertical) witness.
\end{lemma}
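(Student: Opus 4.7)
The plan is to handle the horizontal case (the vertical case being symmetric by transposition). Let $W$ be an $m_0 \times n_0$ horizontal witness with expandable column $c$, and let $W'$ be $W$ with an empty row appended at the bottom. I need to show two things: that $W'$ avoids $P$, and that column $c$ is still expandable in $W'$.

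The first point is easy and only uses the fact that $P$ has no empty rows. Any occurrence of $P$ in $W'$ selects $k$ rows top-to-bottom. The new bottom row is empty, so it cannot be matched with any row of $P$ (each of which has at least one 1-entry). Thus every occurrence of $P$ in $W'$ lies entirely in $W$, so $W$ would contain $P$, a contradiction.

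For expandability, I need to show that adding a single 1-entry at $(r,c)$ in $W'$ creates an occurrence of $P$, for every row $r$. If $r$ is one of the original rows of $W$, then the occurrence of $P$ created by this addition in $W$ is inherited by $W'$ (the extra empty row at the bottom of $W'$ can simply be ignored, i.e.\ deleted). The only nontrivial case is when $r$ is the newly appended empty row, call it $m_0+1$. Here I would use the expandability of $c$ in $W$ applied to row $m_0$: adding a 1 at $(m_0, c)$ in $W$ yields an occurrence of $P$ using rows $r_1 < r_2 < \dots < r_k$ and columns $c_1 < c_2 < \dots < c_k$ of $W$ with some $r_i = m_0$ and $c_j = c$ (this 1-entry must be used, otherwise the occurrence would already exist in $W$). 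Since $m_0$ is the last row of $W$, we must have $i = k$, so row $m_0$ plays the role of the last row of $P$.

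Now comes the step where the hypothesis "only one entry in the last row of $P$" is used. Since row $k$ of $P$ contains only one 1-entry, and the entry $(m_0,c)$ must supply it, the role of row $m_0$ in this occurrence is determined entirely by the single entry at $(m_0,c)$. Thus, if I replace $m_0$ by $m_0+1$ and keep the same columns, the resulting submatrix of (the augmented) $W'$ still covers $P$: rows $r_1,\dots,r_{k-1}$ handle the top $k-1$ rows of $P$ as before, and row $m_0+1$ contributes its sole 1-entry at column $c$, which is exactly what is needed to cover the last row of $P$. Hence column $c$ is expandable in $W'$, completing the proof. The only conceivable obstacle is the subcase of the new bottom row, but the hypothesis on $P$'s last row is precisely what allows the row-swap argument to go through.
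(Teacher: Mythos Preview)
Your proof is correct and follows essentially the same approach as the paper's: both reduce to the horizontal case, argue trivially that $W'$ still avoids $P$, and handle the new bottom row by taking the occurrence created at $(m_0,c)$, observing that $m_0$ must play the role of $P$'s last row, and using the single-entry hypothesis to swap $m_0$ for $m_0+1$. Your write-up is in fact slightly more careful than the paper's (you spell out why $W'$ avoids $P$ and why $(m_0,c)$ is the unique entry used in row $m_0$).
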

\begin{proof}
	We prove the lemma for horizontal witnesses, and appending a row. The other case follows by symmetry.
	Let $W$ be a $m_0 \times n_0$ horizontal witness for $P$, where the $j$-th column of $W$ is expandable. Let $W'$ be a matrix obtained by appending a row. Clearly, $W'$ still does not contain $P$. Moreover, adding an entry in $W'$ at $(i,j)$ for any $i \neq n_0+1$ creates a new occurrence of $P$. It remains to show that adding an entry at $(n_0+1,j)$ creates an occurrence of $P$.
	
	We know that adding an entry at $(n_0,j)$ in $W'$ creates an occurrence of $P$. Let $I$ the set of positions of 1-entries in $W(P)$ that form the occurrence of $P$. Since $P$ has only one entry in the last row, all positions $(i', j') \in I \setminus \{(n_0,j)\}$ satisfy $i' < n_0+1$. Thus, adding a 1-entry at $(n_0+1,j)$ instead of $(n_0,j)$ creates an ocurrence of $P$ at positions $I \setminus \{(n_0,j)\} \cup \{(n_0+1,j)\}$, which implies that $W'$ is a horizontal witness.
\end{proof}

We now prove the following handy lemma, that allows us restrict our attention to the classification of $\sat(P,m_0, n)$ and $\sat(P,m,n_0)$. It essentially is a generalization of the technique used by Fulek and Keszegh to prove that $\sat(Q_2,n) \in \fO(1)$.

\begin{lemma}\label{p:vert_hor_wit}
	Let $P$ be a not-once-separable pattern without empty rows or columns, and with only one 1-entry in the last row and one 1-entry in the last column. Then $\sat(P,n) \in \fO(1)$ if and only if there exist constants $m_0, n_0$ such that $\sat(P,m_0, n) \in \fO(1)$ and $\sat(P,m, n_0) \in \fO(1)$.
\end{lemma}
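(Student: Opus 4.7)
The forward direction is immediate: an explicit witness of some fixed dimensions $m'_0 \times n'_0$ and weight $w$ exists by hypothesis, and replacing its empty rows and columns by arbitrarily many copies produces explicit witnesses of dimensions $m'_0 \times n$ and $m \times n'_0$ with the same weight, so both $\sat(P,m'_0,n)$ and $\sat(P,m,n'_0)$ are in $\fO(1)$.

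For the backward direction, the plan is to fix a horizontal witness $W_H$ of dimensions $m_0 \times n_H$ with expandable column $j^*$, and a vertical witness $W_V$ of dimensions $m_V \times n_0$ with expandable row $i^*$, and to build an $(m_0 + m_V) \times (n_0 + n_H)$ witness for $P$ by arranging these two matrices anti-diagonally:
\begin{align*}
	M = \begin{pmatrix} \zeromat & W_H \\ W_V & \zeromat \end{pmatrix}.
\end{align*}
Once $M$ is shown to be a witness, \Cref{p:gwitness} converts it into an explicit witness, yielding $\sat(P,n) \in \fO(1)$.

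To show that $M$ avoids $P$, I will take a hypothetical occurrence on rows $r_1 < \cdots < r_k$ and columns $c_1 < \cdots < c_\ell$ and classify these indices by the block they fall in (splitting rows at $m_0$ and columns at $n_0$). Since the top-left and bottom-right blocks of $M$ are zero, the $1$-entries of the occurrence must all lie in the top-right block (inside $W_H$) or the bottom-left block (inside $W_V$). If all selected rows or all selected columns fall in a single block, the assumption that $P$ has no empty row or column forces the entire occurrence into $W_H$ or $W_V$, contradicting that these matrices avoid $P$. Otherwise $P$ itself must have the form $\begin{pmatrix} \zeromat & A \\ B & \zeromat \end{pmatrix}$ with $A$ and $B$ both non-empty, contradicting that $P$ is not once-separable.

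For expandability, column $n_0 + j^*$ completes an occurrence of $P$ whenever a $1$ is added at one of its entries in rows $1, \ldots, m_0$, since these entries lie inside $W_H$. For a $1$ added in rows $m_0 + 1, \ldots, m_0 + m_V$, I will observe that the right-hand half of $M$ (columns $n_0 + 1, \ldots, n_0 + n_H$) is precisely $W_H$ with $m_V$ empty rows appended at the bottom; \Cref{p:ext-hor-wit}, applied using the single-$1$-entry hypothesis on the last row of $P$, guarantees that column $j^*$ remains expandable after this extension, producing the desired occurrence inside $M$. The expandability of row $m_0 + i^*$ is symmetric, using the single-$1$-entry hypothesis on the last column of $P$ to append empty columns to $W_V$ on the right. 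I expect the main subtlety to lie in this directional matching: a diagonal arrangement of $W_V$ and $W_H$ would require extending them upward and leftward, for which analogous hypotheses on the \emph{first} row and column of $P$ would be required; the anti-diagonal arrangement sidesteps this precisely so that both applications of \Cref{p:ext-hor-wit} go through under the hypotheses available.
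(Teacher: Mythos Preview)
Your proposal is correct and follows essentially the same route as the paper: the same anti-diagonal block matrix $\begin{pmatrix}\zeromat & W_H\\ W_V & \zeromat\end{pmatrix}$, the same not-once-separable argument for avoidance, and the same use of \Cref{p:ext-hor-wit} (applied iteratively) to extend $W_H$ downward and $W_V$ rightward so that the expandable column and row survive in the combined matrix. Your discussion of why the anti-diagonal orientation is forced by the last-row/last-column hypotheses is a nice bit of extra clarity not spelled out in the paper.
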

\begin{proof}
	Suppose that $\sat(P,n) \in \fO(1)$. Then $P$ has a $m_0 \times n_0$ witness $M$, and thus $\sat(P,m_0, n)$ is at most the weight of $M$, for every $n \ge n_0$. Similarly, $\sat(P,m, n_0) \in \fO(1)$.
	
	Now suppose that $\sat(P,m_0, n) \in \fO(1)$ and $\sat(P,m, n_0) \in \fO(1)$. Then, for some $m_1, n_1$, there exists a $m_0 \times n_1$ horizontal witness $W_\rH$ and a $m_1 \times n_0$ vertical witness $W_\rV$. Consider the following $(m_0+m_1) \times (n_0+n_1)$ matrix, where $\zeromat_{m \times n}$ denotes the all-0 $m \times n$ matrix:
	\begin{align*}
		W = \begin{pmatrix} \zeromat_{m_0 \times n_0} & W_\rH \\ W_\rV & \zeromat_{m_1 \times n_1} \end{pmatrix}
	\end{align*}
	
	We first show that $W$ does not contain $P$. Suppose it does. Since $P$ is contained neither in $W_\rH$ nor in $W_\rV$, an occurrence of $P$ in $W$ must contain 1-entries in both the bottom left and top right quadrant. But then $P$ must be once-separable, a contradiction.
	
	By \Cref{p:ext-hor-wit}, $W_\rV' = (W_\rV, \zeromat_{m_1 \times n_1})$ is a vertical witness, and $W_\rH' = \binom{W_\rH}{\zeromat_{m_1 \times n_1}}$ is a horizontal witness. The expandable row in $W_\rV'$ and the expandable column in $W_\rH'$ are both also present in $W$. This implies that $W$ is a witness for $P$, so $\sat(P, n) \in \fO(1)$.
\end{proof}

\Cref{fig:wit-genwit} shows an example of a witness for $Q_1$, constructed with \Cref{p:vert_hor_wit}, using vertical/horizontal witnesses presented later in \Cref{sec:wit-constr}, and an explicit witness constructed using \Cref{p:gwitness}.

\begin{figure}
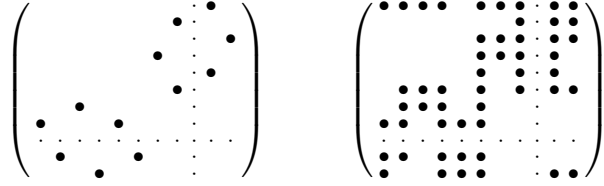

	\centering
	\begin{align*}
		\begin{smallbulletmatrix}
			  &  &  &  &  &  &  &  &\d&\o&  \\
			  &  &  &  &  &  &  &\o&\d&  &  \\
			  &  &  &  &  &  &  &  &\d&  &\o\\
			  &  &  &  &  &  &\o&  &\d&  &  \\
			  &  &  &  &  &  &  &  &\d&\o&  \\
			  &  &  &  &  &  &  &\o&\d&  &  \\
			  &  &\o&  &  &  &  &  &\d&  &  \\
			\o&  &  &  &\o&  &  &  &\d&  &  \\
			\d&\d&\d&\d&\d&\d&\d&\d&\d&\d&\d\\
			  &\o&  &  &  &\o&  &  &\d&  &  \\
			  &  &  &\o&  &  &  &  &\d&  &  
		\end{smallbulletmatrix}
		\hspace{10mm}
		\begin{smallbulletmatrix}
			\o&\o&\o&\o&  &\o&\o&\o&\d&\o&\o\\
			  &  &  &  &  &  &  &\o&\d&\o&\o\\
			  &  &  &  &  &\o&\o&\o&\d&\o&\o\\
			  &  &  &  &  &\o&\o&\o&\d&\o&  \\
			  &  &  &  &  &\o&  &\o&\d&\o&  \\
			  &\o&\o&\o&  &\o&  &\o&\d&\o&\o\\
			  &\o&\o&\o&  &\o&  &  &\d&  &  \\
			\o&\o&  &\o&\o&\o&  &  &\d&  &  \\
			\d&\d&\d&\d&\d&\d&\d&\d&\d&\d&\d\\
			\o&\o&  &\o&\o&\o&  &  &\d&  &  \\
			\o&  &  &\o&\o&\o&  &  &\d&\o&\o
		\end{smallbulletmatrix}
	\end{align*}
	\caption{A witness (left) and an explicit witness (right) for the pattern $Q_1$. The small dots indicate the expandable row/column.}\label{fig:wit-genwit}
\end{figure}

For certain classes of patterns closed under rotation or inversion, we can further restrict our attention to only vertical witnesses.

\begin{lemma}\label{p:vert_wit_suff}
	Let $\fP$ be a class of not-once-separable patterns without empty rows or columns, and with only one 1-entry in the last row and one 1-entry in the last column. If $\fP$ is closed under rotation or inversion and each pattern in $\fP$ has a vertical witness, then $\sat(P,n) \in \fO(1)$ for each $P \in \fP$.
\end{lemma}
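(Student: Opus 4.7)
The plan is to invoke \Cref{p:vert_hor_wit}, which requires both a vertical and a horizontal witness for $P$. The hypothesis already supplies vertical witnesses for every pattern in $\mathcal{P}$, so the task reduces to producing a horizontal witness for each $P \in \mathcal{P}$; closure under rotation or inversion will supply it ``for free'' by a symmetry argument.

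Fix $P \in \mathcal{P}$. In the inversion case, $P^T$ also lies in $\mathcal{P}$ and by hypothesis admits a vertical witness $W$, i.e., a matrix avoiding $P^T$ with an expandable row. Transposition preserves containment while swapping the roles of rows and columns, so $W^T$ avoids $P$ and has an expandable column, making it a horizontal witness for $P$. In the rotation case, let $P'$ be $P$ rotated by $90^\circ$ clockwise; then $P' \in \mathcal{P}$ has a vertical witness $W$, and rotating $W$ by $90^\circ$ counterclockwise yields a matrix $W'$ that avoids $P$ and whose column corresponding to the expandable row of $W$ is expandable. Hence $W'$ is a horizontal witness for $P$.

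With both a vertical and a horizontal witness for $P$ in hand, and since $P$ is not-once-separable, has no empty rows or columns, and has only one 1-entry in its last row and last column (all directly from the hypothesis on $\mathcal{P}$), \Cref{p:vert_hor_wit} applies and yields $\sat(P,n) \in \fO(1)$.

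The only point requiring any care is the claim that rotation or inversion transports a vertical witness to a horizontal witness; this is immediate from the definitions, since ``adding a 1-entry anywhere in a given empty column of $W^T$ (resp.\ $W'$) creates an occurrence of $P$'' translates under the symmetry precisely to ``adding a 1-entry anywhere in the corresponding empty row of $W$ creates an occurrence of $P^T$ (resp.\ $P'$)'', which is exactly the expandability of that row of $W$. So no real obstacle arises beyond bookkeeping.
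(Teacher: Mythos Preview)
Your proof is correct and follows essentially the same approach as the paper: use closure under rotation or inversion to convert a vertical witness for the transformed pattern into a horizontal witness for $P$, then apply \Cref{p:vert_hor_wit}. The paper's version is more terse (handling rotation and inversion via a single ``respectively'' clause), but the argument is identical.
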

\begin{proof}
	By \Cref{p:vert_hor_wit}, it suffices to show that each pattern in $\fP$ has a horizontal witness. Let $P \in \fP$ and let $P' \in \fP$ be obtained by rotating $P$ by 90 degrees clockwise (respectively, by inverting $P$). Let $W'$ be a vertical witness for $P'$, and let $W$ be obtained by rotating $W'$ by 90 degrees counterclockwise (respectively, by inverting $W'$). Clearly, $W$ is a horizontal witness for $P$. \Cref{p:vert_hor_wit} concludes the proof.
\end{proof}

\section{A simple witness construction}\label{sec:wit-constr}

We present a construction that yields vertical witnesses for certain non-trivial matrices with only one 1-entry in the first and last column. \Cref{fig:WP} shows an example of the construction. The idea is simple: Make two copies $P_1$ and $P_2$ of $P$, and arrange them in a way that the rightmost 1-entry of $P_1$ coincides with the leftmost 1-entry of $P_2$ (increase the matrix size as necessary, without creating empty rows or columns). Then, delete the column where $P_1$ and $P_2$ intersect. Note that this creates an empty row, which formerly contained the intersection of $P_1$ and $P_2$. Adding a 1-entry in that row creates an occurrence of $P$ by either completing $P_1$ or $P_2$, so that row is expandable. If now the constructed matrix also avoids $P$ (which is not necessarily the case), then it is a vertical witness for $P$. We now proceed with the formal definition and proof.

\begin{figure}
	\centering
	\begin{align*}
		\begin{smallbulletmatrix}
			  &  &\o&  \\
			\o&  &  &  \\
			  &  &  &\o\\
			  &\o&  &  
		\end{smallbulletmatrix}
		\rightarrow
		\begin{smallbulletmatrix}
			  &  &\o&  &  &  &  \\
			\o&  &  &  &  &\o&  \\
			  &  &  &\textcolor{red}{\o}&  &  &  \\
			  &\o&  &  &  &  &\o\\
			  &  &  &  &\o&  &  
		\end{smallbulletmatrix}
		\rightarrow
		\begin{smallbulletmatrix}
			  &  &\o&  &  &  \\
			\o&  &  &  &\o&  \\
			\d&\d&\d&\d&\d&\d\\
			  &\o&  &  &  &\o\\
			  &  &  &\o&  &  
		\end{smallbulletmatrix}
	\end{align*}
	\caption{Construction of $W(Q_1)$ from $Q_1$. The small dots indicate the expandable row.}\label{fig:WP}
\end{figure}

Let $P = (p_{i,j})_{i,j}$ be a $k \times k$ pattern with exactly one entry in the first column and exactly one entry in the last column. Let $s$ and $t$ be the rows of the leftmost and rightmost 1-entry in $P$, i.e., $p_{s,1} = 1$ and $p_{t,k} = 1$. Without loss of generality, assume that $s < t$. We define the $(k+t-s) \times (2k-2)$ matrix $W(P) = (w_{i,j})_{i,j}$ as follows:
\begin{align*}
	w_{i,j} = \begin{cases}
		p_{i,j}, & \text{ if } j < k, i \le k \\
		p_{i - (t-s), j - (k-2)}, & \text{ if } j \ge k, i \ge (t-s) + 1 \\
		0, & \text{ otherwise.}
	\end{cases}
\end{align*}

\begin{lemma}\label{p:wp_vert_wit}
	Let $P$ be a non-trivial pattern without empty rows and columns and with exactly one entry in the first and last column. If $W(P)$ avoids $P$, then $W(P)$ is a vertical witness for $P$.
\end{lemma}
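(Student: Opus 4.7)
The plan is to show that row $t$ of $W(P)$ is the required expandable row. Since $W(P)$ avoids $P$ by hypothesis, this is exactly what is needed to conclude that $W(P)$ is a vertical witness, so it suffices to (a) verify row $t$ is empty, and (b) show that adding a 1-entry anywhere in row $t$ creates an occurrence of $P$.

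For (a), I would use non-triviality of $P$. Since column $1$ has exactly one 1-entry (at row $s$), the row that non-triviality guarantees to have its unique 1 in the leftmost position must be row $s$ itself; hence row $s$ of $P$ is zero outside column $1$. Symmetrically, row $t$ of $P$ is zero outside column $k$. Reading off the definition of $W(P)$, row $t$ consists of row $t$ of $P$ restricted to columns $1, \ldots, k-1$ (all zero) concatenated with row $s$ of $P$ restricted to columns $2, \ldots, k$ (also all zero), so row $t$ is indeed empty.

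For (b), I would case-split on the position $c$ of the added 1-entry, using whichever of the two embedded copies of $P$ inside $W(P)$ is completed by the new entry. If $c \ge k$, I would take the occurrence on rows $1, \ldots, k$ and columns $1, \ldots, k-1, c$: the submatrix of $W(P)$ on rows $1, \ldots, k$ and columns $1, \ldots, k-1$ is literally the first $k-1$ columns of $P$, while the added entry at $(t, c)$ supplies the unique 1-entry of column $k$ of $P$. If instead $c < k$, I would take the occurrence on rows $t-s+1, \ldots, t-s+k$ and columns $c, k, k+1, \ldots, 2k-2$: by the row-shift built into the definition of $W(P)$, the submatrix at these rows and columns $k, \ldots, 2k-2$ is precisely the last $k-1$ columns of $P$, and the added entry at $(t, c)$, matched to the position $(s, 1)$ of $P$, completes the occurrence. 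The main care in writing this out is index-tracking: one has to check that the shifted row indices lie within $W(P)$ and that the added entry at $(t, c)$ lands exactly at the image of the ``missing'' 1-entry of $P$ (either $(t, k)$ or $(s, 1)$) under the embedding---which is where the WLOG assumption $s < t$ enters.
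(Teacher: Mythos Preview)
Your proposal is correct and follows essentially the same approach as the paper: both verify that row $t$ is empty via non-triviality (the paper states this in one line, you unpack the index computation), and both exhibit the occurrence of $P$ after adding an entry at $(t,c)$ by selecting the appropriate shifted copy of $P$ together with column $c$, case-splitting on $c<k$ versus $c\ge k$. Your version is slightly more explicit about the index bookkeeping, but the underlying argument is identical.
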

\begin{proof}
	Since $P$ is non-trivial, the $s$-th and $t$-th rows of $P$ each only contain one entry, so the $t$-th row of $W(P)$ is empty. It remains to show that adding a 1-entry in the $t$-th row of $W(P)$ creates a new occurrence of $P$.
	
	Let $M$ be the matrix obtained by adding a 1-entry $(t,u)$ in $W(P)$. If $u \le k-1$, we remove the first $t-s$ rows and all columns other than the $u$-th and the last $k-1$. The result is $P$ with an additional 1-entry in the first column (which was the $u$-th column in $M$). If $u > k-1$, we remove the last $t-s$ rows and all columns except the $u$-th and the first $k-1$. The result is $P$ with an additional 1-entry in the last column.
\end{proof}

\subsection{Non-trivial \texorpdfstring{$Q_1$}{Q\_1}-like patterns}

\begin{lemma}\label{p:wp-q1-like}
	Let $P$ be a non-trivial $Q_1$-like pattern. Then $W(P)$ avoids $P$.
\end{lemma}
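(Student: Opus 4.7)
My plan is to argue by contradiction. Suppose $W(P)$ contains $P$ via row indices $r_1<\dots<r_k$ and column indices $c_1<\dots<c_k$. The columns of $W(P)$ split into the disjoint blocks $\{1,\dots,k-1\}$ (the columns of $P_1$) and $\{k,\dots,2k-2\}$ (those of $P_2$); since neither block has $k$ columns, the occurrence must use columns from both, so there is a unique $\ell\in\{1,\dots,k-1\}$ with $c_\ell\le k-1<c_{\ell+1}$. Writing the four outer entries of $P$ as top $(1,a)$, left $(s,1)$, right $(t,k)$, bottom $(k,b)$, the $Q_1$-like hypothesis gives $s<t$ and $a>b$, and non-triviality ensures that rows $s$ and $t$ of $P$ each contain a single 1-entry.

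The key step is a case analysis on where the top and bottom entries lie, i.e., on whether $a\le\ell$ or $a\ge\ell+1$, and similarly for $b$. The subcase $a\le\ell$, $b\ge\ell+1$ contradicts $a>b$ at once. In the opposite mixed case, the top entry being in $P_2$ forces $r_1\ge t-s+1$ while the bottom being in $P_1$ forces $r_k\le k$; but then the $k$ indices $r_1,\dots,r_k$ must fit into the interval $[t-s+1,k]$, which has only $k-t+s<k$ integers, a contradiction. The two remaining subcases are where the row set is squeezed into an interval of size exactly $k$ and the row indices become rigidly determined: both outer entries in $P_1$ gives $r_i=i$, while both in $P_2$ gives $r_i=(t-s)+i$. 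In the first, the right entry $(t,c_k)$ lies in $P_2$ and translates to $p_{s,c_k-k+2}=1$; by non-triviality, row $s$ of $P$ has its unique 1 in column $1$, forcing $c_k=k-1<k$. The second is symmetric: the left entry translates to $p_{t,c_1}=1$, and by non-triviality of row $t$, one obtains $c_1=k>k-1$.

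The main obstacle I anticipate is setting up the two ``rigid'' cases cleanly: one has to spot that $r_k\le k$ (respectively $r_1\ge t-s+1$), combined with $r_1<\dots<r_k$, forces the row set to coincide exactly with $\{1,\dots,k\}$ (respectively $\{t-s+1,\dots,k+t-s\}$), because the admissible interval already has exactly $k$ integers. Only this rigid identification lets one translate an outer entry through the shift defining $P_2$ into a single position of $P$ where non-triviality produces the column contradiction. The argument uses both the $Q_1$-specific inequality $a>b$ (to dispatch the easy mixed case) and the non-triviality hypothesis on rows $s$ and $t$ (to close the rigid cases); weakening either hypothesis would leave at least one case open.
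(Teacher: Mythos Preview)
Your argument is correct and rests on the same row-counting idea as the paper's, but your four-way case split is more work than needed. The paper cases only on the column $j_\rb$ of the \emph{bottommost} entry of the putative occurrence: if $j_\rb\le k-1$ then that entry lies in the $P_1$ block, so $i_\rb\le k$ and the whole occurrence sits in rows $1,\dots,k$; if $j_\rb\ge k$ then, because $P$ is $Q_1$-like, the topmost entry has $j_\rt>j_\rb\ge k$, so it lies in the $P_2$ block and $i_\rt\ge t-s+1$, placing the occurrence in rows $t-s+1,\dots,k+t-s$. Either way the $k$ chosen rows fill an interval of exactly $k$ rows that contains the empty row~$t$, an immediate contradiction. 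This collapses your four cases to two and replaces the translation step in your ``rigid'' cases (reading off $p_{s,c_k-k+2}=1$ or $p_{t,c_1}=1$ and invoking non-triviality of rows $s,t$) by the single fact, already established in Lemma~\ref{p:wp_vert_wit}, that row $t$ of $W(P)$ is empty. Your use of $a>b$ to kill one mixed case and of the column-block split to force $c_1\le k-1<c_k$ are both fine; they are just a more explicit unpacking of what the paper encodes via the single inequality $j_\rb<j_\rt$.
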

\begin{proof}
	Suppose $W(P)$ contains an occurrence of $P$, say at positions $I$. Consider the bottommost and topmost positions $(i_\rb, j_\rb), (i_\rt, j_\rt) \in I$. Since $P$ is $Q_1$-like, we have $j_\rb < j_\rt$. Moreover, $i_\rt - i_\rb \ge k-1$ (since $P$ has $k-2$ rows between the bottomost and topmost 1-entry).
	
	Consider first the case that $j_\rb \le k-1$. Then, by construction, $i_\rb \le k$, which implies that $I$ is completely contained in the first $k$ rows, including the empty $t$-th row. However, an occurrence of $P$ must have 1-entries in $k$ distinct rows, a contradiction.
	
	Second, consider the case that $j_\rb > k-1$. Then $j_\rt > k$. By construction, this implies that $i_\rt \ge t-s+1$. Since $W(P)$ has $t-s+k$ rows in total, $I$ is contained in the last $k$ rows, including the empty $t$-th row. This is again a contradiction.
\end{proof}

The class of non-trivial $Q_1$-like patterns is closed under rotation, so \Cref{p:vert_wit_suff} and \Cref{p:wp_vert_wit} imply \Cref{p:q1-like}.

\restateQOneLike*

\subsection{Some \texorpdfstring{$Q_0$}{Q\_0}-like permutation matrix patterns}

One can manually check that $W(P)$ avoids $P$ even for many $Q_0$-like patterns, such as $Q_5$. We refine \Cref{p:wp-q1-like} to cover more patterns, including $Q_5$ and all but four of the not-once-separable $6 \times 6$ permutation matrices, up to reflection. For three of the remaining patterns, we individually show that $W(P)$ yields a witness. For the last pattern, we construct a witness by modifying $W(P)$ slightly. This shows that every $Q_0$-like permutation matrix of size at most 6 has a vertical witness. Since these patterns are closed under inversion, they all have bounded saturation function. Together with \Cref{p:q1-like}, we obtain:

\restateSmallPatterns*

Let $I = \{(i_\rb, j_\rb), (i_\rt, j_\rt)\}$ be an occurrence of $\Iprimetwomat$ in some matrix, i.e., two 1-entries with $i_\rb > i_\rt$ and $j_\rb < j_\rt$. We define the \emph{height} if $I$ as $i_\rt - i_\rb + 1$, the number of rows containing an entry in $I$ or between the two entries of $I$.

We first consider $Q_0$-like patterns that contain an occurrence of $\Iprimetwomat$ with height $k-1$, which, among others, covers all but four permutation matrices of size at most $6$. Observe that permutation matrices of this type can be though of as \emph{almost} $Q_1$-like: Removing the top (or bottom) row and then the new empty column creates a $Q_1$-like permutation matrix. We first prove some facts about occurrences of $\Iprimetwomat$ in $W(P)$.

\begin{lemma}\label{p:occ-not-height-k}
	Let $P$ be a non-trivial $Q_0$-like $k \times k$ pattern. Then each occurrence $I$ of $\Iprimetwomat$ in $W(P)$ has height at most $k-1$.
\end{lemma}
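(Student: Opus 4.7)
The plan is to carry out a case analysis on which of the two embedded copies of $P$ each entry of the occurrence $I$ belongs to. Recall that in $W(P)$ the first copy $P_1$ sits in rows $1, \ldots, k$ and columns $1, \ldots, k-1$, while the second copy $P_2$ sits in rows $t-s+1, \ldots, t-s+k$ and columns $k, \ldots, 2k-2$. Since the column ranges $[1,k-1]$ and $[k,2k-2]$ are disjoint, every 1-entry of $W(P)$ belongs to exactly one of $P_1$ or $P_2$, giving four cases.

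I would first handle the two \emph{mixed} cases. If $(i_\rt, j_\rt) \in P_1$ and $(i_\rb, j_\rb) \in P_2$, then $j_\rt \le k-1 < k \le j_\rb$, which contradicts $j_\rb < j_\rt$; so this case is vacuous. In the opposite mixed case, $(i_\rt, j_\rt) \in P_2$ and $(i_\rb, j_\rb) \in P_1$, so $i_\rt \ge t-s+1$ and $i_\rb \le k$, and the height is bounded by $k - (t-s+1) + 1 = k - t + s$. Since any $Q_0$-like pattern has $s < t$ (the leftmost outer entry sits strictly above the rightmost), this quantity is at most $k-1$.

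The remaining \emph{uniform} cases are where the $Q_0$-likeness really enters. If both entries of $I$ lie in $P_1$, then both rows belong to $[1,k]$, so the height is at most $k$, with equality forcing $i_\rt = 1$ and $i_\rb = k$. Because a $Q_0$-like pattern has exactly four outer entries, the first and last rows of $P$ each contain exactly one 1-entry, namely the topmost and bottommost outer entries, at columns $c_\rt$ and $c_\rb$ respectively with $c_\rt < c_\rb$ (the $Q_0$-orientation). Hence $j_\rt = c_\rt < c_\rb = j_\rb$, contradicting $j_\rb < j_\rt$, so the height is at most $k-1$. The case of both entries in $P_2$ is identical after shifting column indices by $k-2$.

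I do not foresee a real obstacle. The whole argument is index bookkeeping, and the $Q_0$-hypothesis is invoked only in the uniform cases, precisely to rule out a height-$k$ occurrence that would otherwise have to consist of the unique entries in the first and last rows of a single copy of $P$.
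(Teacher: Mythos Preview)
Your proof is correct and follows essentially the same approach as the paper's. The paper argues by contradiction (assume height $\ge k$, rule out the one non-vacuous mixed case, and then observe that a height-$k$ occurrence inside a single copy would give a height-$k$ occurrence of $\Iprimetwomat$ in $P$ itself, contradicting the $Q_0$-shape), whereas you organise the same observations as a direct four-case bound; the content is the same.
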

\begin{proof}
	Suppose there is an occurrence $I = \{(i_\rb, j_\rb), (i_\rt, j_\rt)\}$ of $\Iprimetwomat$ in $W(P)$ of height at least $k$, i.e., $j_\rb < j_\rt$ and $i_\rb - i_\rt \ge k-1$. We claim that $I$ is completely contained in one of the two partial copies of $P$ in $W(P)$, i.e., either $j_\rb < j_\rt \le k-1$ or $k-1 < j_\rb < j_\rt$. This implies that there is also a height-$k$ occurrence of $\Iprimetwomat$ in $P$, which contradicts the assumption that $P$ is $Q_0$-like. It remains to show our claim.
	
	Let $s$ and $t$ be the rows of the leftmost and rightmost 1-entry in $P$. Towards our claim, suppose on the contrary that $j_\rb \le k-1 < j_\rt$. Then $i_\rt \ge (t-s)+1$ by construction, and thus $i_\rb \ge k+(t-s) > k$. But then $(i_\rb, j_\rb)$ cannot be a 1-entry, a contradiction.
\end{proof}

\begin{lemma}\label{p:occ-height-empty-row}
	Let $P$ be a non-trivial $Q_0$-like $k \times k$ pattern. Then each occurrence $I$ of $\Iprimetwomat$ in $W(P)$ with height $k-1$ has the empty row between its two entries.
\end{lemma}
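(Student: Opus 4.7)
The plan is to argue by dimension counting, sidestepping the kind of ``left copy vs.\ right copy'' case split used in \Cref{p:occ-not-height-k}. Let $I = \{(i_\rb, j_\rb), (i_\rt, j_\rt)\}$ be a height-$(k-1)$ occurrence of $\Iprimetwomat$, so that $i_\rb = i_\rt + k - 2$. Since $W(P)$ has $k + t - s$ rows, the bounds $1 \le i_\rt$ and $i_\rb \le k + t - s$ rearrange (using the height equation) to $i_\rt \le t - s + 2$ and $i_\rb \ge k - 1$.

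Next I would use the $Q_0$-like hypothesis to pin down where $s$ and $t$ can lie. Since the four outer entries of $P$ form an occurrence of $Q_0$, they are pairwise distinct and miss all four corners of $P$; applying this to the outer entries $(s, 1)$ and $(t, k)$ forces $s, t \in [2, k-1]$. Plugging these into the bounds from the previous step gives $i_\rt \le t - s + 2 \le t$ (using $s \ge 2$) and $i_\rb \ge k - 1 \ge t$ (using $t \le k - 1$), and hence the non-strict inclusion $t \in [i_\rt, i_\rb]$.

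Finally, the strictness comes for free. As observed in the proof of \Cref{p:wp_vert_wit}, row $t$ of $W(P)$ is empty, whereas $(i_\rt, j_\rt)$ and $(i_\rb, j_\rb)$ are 1-entries; therefore $t \ne i_\rt$ and $t \ne i_\rb$, upgrading the inclusion to $i_\rt < t < i_\rb$, which is exactly the claim that the empty row $t$ lies strictly between the two entries of $I$.

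I do not anticipate a serious obstacle here. The main conceptual point is that the row-count of $W(P)$ together with the off-corner constraint coming from $Q_0$-likeness is already tight enough to sandwich $t$ between $i_\rt$ and $i_\rb$; no case analysis on which of the two partial copies of $P$ contains each entry is needed, in contrast to the analogous argument in \Cref{p:occ-not-height-k}.
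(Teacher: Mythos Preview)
Your argument is correct and is essentially identical to the paper's own proof: the paper also bounds $i_\rt \le t-s+2 \le t$ and $i_\rb \ge k-1 \ge t$ directly from the row count of $W(P)$ and the constraints $s\ge 2$, $t\le k-1$, then gets strictness from the emptiness of row $t$. Your framing of this as ``sidestepping a case split'' is apt, but note that the paper itself already does precisely this here; the left/right-copy case analysis is used only in \Cref{p:occ-not-height-k}, not in the present lemma.
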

\begin{proof}
	Let $s$ and $t$ be the rows of the leftmost and rightmost 1-entry in $P$, so $W(P)$ is a $(k+t-s) \times (2k-2)$ matrix where the $t$-th row is empty. Since $P$ is $Q_0$-like, we have $s \ge 2$ and $t \le k-1$. Consider an occurrence $I = \{(i_\rb, j_\rb), (i_\rt, j_\rt)\}$ of $\Iprimetwomat$ in $W(P)$ where $i_\rb - i_\rt = k-2$. We have
	\begin{align*}
		& i_\rt = i_\rb - k+2 \le  k + t - s - k + 2 = t-s+2 \le t \text{, and}\\
		& i_\rb = k-2 + i_\rt \ge k-1 \ge t.
	\end{align*}
	Since the $t$-th row is empty, we also have $i_\rt \neq t \neq i_\rb$, and thus $i_\rt < t < i_\rb$.
\end{proof}

\begin{proposition}\label{p:almost-q1}
	Let $P$ be a non-trivial $Q_0$-like $k \times k$ pattern that contains an occurrence $\Iprimetwomat$ of height $k-1$. Then $W(P)$ avoids $P$.
\end{proposition}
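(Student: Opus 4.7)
The plan is to argue by contradiction, using the two preceding lemmas together with a rigidity property of order-preserving row embeddings whose endpoints lie a tight distance apart. Suppose $W(P)$ contains $P$, and let $\phi\colon\{1,\dots,k\}\to\{1,\dots,k+t-s\}$ be the strictly increasing row map induced by this occurrence, where $s,t$ are defined as in the construction of $W(P)$. Let $r_1<r_2$ be the two rows of $P$ carrying the hypothesised height-$(k-1)$ copy of $\Iprimetwomat$, so that $r_2-r_1=k-2$. Tracking these two rows under $\phi$ produces an occurrence of $\Iprimetwomat$ in $W(P)$ of height $\phi(r_2)-\phi(r_1)+1\ge (r_2-r_1)+1=k-1$.

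By \Cref{p:occ-not-height-k} the height of any occurrence of $\Iprimetwomat$ in $W(P)$ is at most $k-1$, so $\phi(r_2)-\phi(r_1)=k-2$. \Cref{p:occ-height-empty-row} then forces the empty $t$-th row of $W(P)$ to lie strictly between $\phi(r_1)$ and $\phi(r_2)$. Because $\phi$ is strictly increasing and $\phi(r_2)-\phi(r_1)$ equals $r_2-r_1$ exactly, the $k-1$ values $\phi(r_1),\phi(r_1+1),\dots,\phi(r_2)$ are necessarily consecutive integers, so $\phi(r_1+i)=\phi(r_1)+i$ for each $0\le i\le k-2$. In particular, some interior row $r_1+i$ of $P$ is mapped by $\phi$ onto the empty $t$-th row of $W(P)$.

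This yields the contradiction: since $P$ has no empty rows, row $r_1+i$ of $P$ contains a 1-entry, which the embedding demands appear as a 1-entry of row $t$ in $W(P)$, yet row $t$ of $W(P)$ is empty. Hence $W(P)$ avoids $P$, as required. The only real work is assembling the two preceding lemmas with the tightness of $\phi$; there is no serious obstacle beyond observing that an order-preserving map whose endpoints are a distance $r_2-r_1$ apart in both domain and codomain must act as a translation on all intermediate rows, which is immediate.
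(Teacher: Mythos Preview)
Your proof is correct and follows essentially the same approach as the paper's: both argue that the height-$(k-1)$ copy of $\Iprimetwomat$ in $P$ must map to a height-$(k-1)$ copy in $W(P)$ (using \Cref{p:occ-not-height-k}), and then derive a contradiction from \Cref{p:occ-height-empty-row} because the empty row $t$ would have to receive a non-empty row of $P$. The paper phrases the last step as ``$I$ must have $k-3$ non-empty rows between its entries,'' while you make the equivalent observation that the tight row map $\phi$ acts as a translation on $[r_1,r_2]$; these are the same argument.
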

\begin{proof}
	Suppose that $P$ is contained in $W(P)$. Then $W(P)$ must contain an occurrence $I$ of $\Iprimetwomat$, such that there are $k-3$ non-empty rows between the two entries in $I$. This means that either $I$ has height at least $k$, or $I$ has height $k-1$ and there are no empty rows between its two entries. The former is impossible by \Cref{p:occ-not-height-k}, the latter is impossible by \Cref{p:occ-height-empty-row}.
\end{proof}

There are four remaining not-once-separable $Q_0$-like permutation matrices of size at most 6. \Cref{fig:medium-q0-pats} shows them along with vertical witnesses.

\begin{figure}
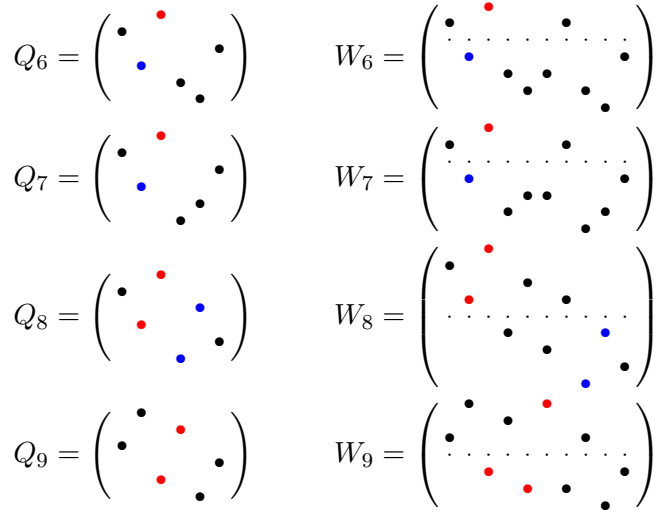

	\centering
	\begin{align*}
		Q_6 =
		\begin{smallbulletmatrix}
			  &  &\r&  &  &  \\
			\o&  &  &  &  &  \\
			  &  &  &  &  &\o\\
			  &\b&  &  &  &  \\
			  &  &  &\o&  &  \\
			  &  &  &  &\o&  
		\end{smallbulletmatrix}
		\hspace{10mm} W_6 =
		\begin{smallbulletmatrix}
			  &  &\r&  &  &  &  &  &  &  \\
			\o&  &  &  &  &  &\o&  &  &  \\
			\d&\d&\d&\d&\d&\d&\d&\d&\d&\d\\
			  &\b&  &  &  &  &  &  &  &\o\\
			  &  &  &\o&  &\o&  &  &  &  \\
			  &  &  &  &\o&  &  &\o&  &  \\
			  &  &  &  &  &  &  &  &\o&  
		\end{smallbulletmatrix}
		\\
		Q_7 =
		\begin{smallbulletmatrix}
			  &  &\r&  &  &  \\
			\o&  &  &  &  &  \\
			  &  &  &  &  &\o\\
			  &\b&  &  &  &  \\
			  &  &  &  &\o&  \\
			  &  &  &\o&  &  
		\end{smallbulletmatrix}
		\hspace{10mm} W_7 =
		\begin{smallbulletmatrix}
			  &  &\r&  &  &  &  &  &  &  \\
			\o&  &  &  &  &  &\o&  &  &  \\
			\d&\d&\d&\d&\d&\d&\d&\d&\d&\d\\
			  &\b&  &  &  &  &  &  &  &\o\\
			  &  &  &  &\o&\o&  &  &  &  \\
			  &  &  &\o&  &  &  &  &\o&  \\
			  &  &  &  &  &  &  &\o&  &  
		\end{smallbulletmatrix}
		\\
		Q_8 =
		\begin{smallbulletmatrix}
			  &  &\r&  &  &  \\
			\o&  &  &  &  &  \\
			  &  &  &  &\b&  \\
			  &\r&  &  &  &  \\
			  &  &  &  &  &\o\\
			  &  &  &\b&  &  
		\end{smallbulletmatrix}
		\hspace{10mm} W_8 =
		\begin{smallbulletmatrix}
			  &  &\r&  &  &  &  &  &  &  \\
			\o&  &  &  &  &  &  &  &  &  \\
			  &  &  &  &\o&  &  &  &  &  \\
			  &\r&  &  &  &  &\o&  &  &  \\
			\d&\d&\d&\d&\d&\d&\d&\d&\d&\d\\
			  &  &  &\o&  &  &  &  &\b&  \\
			  &  &  &  &  &\o&  &  &  &  \\
			  &  &  &  &  &  &  &  &  &\o\\
			  &  &  &  &  &  &  &\b&  &  
		\end{smallbulletmatrix}
		\\
		Q_9 =
		\begin{smallbulletmatrix}
			  &\o&  &  &  &  \\
			  &  &  &\r&  &  \\
			\o&  &  &  &  &  \\
			  &  &  &  &  &\o\\
			  &  &\r&  &  &  \\
			  &  &  &  &\o&  
		\end{smallbulletmatrix}
		\hspace{10mm} W_9 =
		\begin{smallbulletmatrix}
			  &\o&  &  &  &\r&  &  &  &  \\
			  &  &  &\o&  &  &  &  &  &  \\
			\o&  &  &  &  &  &  &\o&  &  \\
			\d&\d&\d&\d&\d&\d&\d&\d&\d&\d\\
			  &  &\r&  &  &  &  &  &  &\o\\
			  &  &  &  &\r&  &\o&  &  &  \\
			  &  &  &  &  &  &  &  &\o&  
		\end{smallbulletmatrix}
	\end{align*}
	\caption{Remaining $Q_0$-like $6 \times 6$ permutation matrices with vertical witnesses.}\label{fig:medium-q0-pats}
\end{figure}

\begin{proposition}\label{p:remaining-four-q0}
	For $i \in \{6,7,8,9\}$, the matrix $W_i$ is a vertical witness for $Q_i$.
\end{proposition}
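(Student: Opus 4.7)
The plan is to split the proof into two parts. For $i \in \{6, 7, 8\}$, first I would verify by direct inspection that $W_i$ coincides with $W(Q_i)$ as defined before \Cref{p:wp_vert_wit}. Granting this, \Cref{p:wp_vert_wit} immediately supplies the expandable-row property, reducing the task to showing that $W(Q_i)$ avoids $Q_i$. I would prove avoidance by supposing an occurrence at rows $r_1 < \dots < r_6$ and columns $c_1 < \dots < c_6$ and deriving a contradiction. The $t$-th row of $W(Q_i)$ is empty, so it lies outside $\{r_1, \dots, r_6\}$. For $W_6$ and $W_7$ there are exactly six non-empty rows, so the $r_i$ are forced; for $W_8$ there are eight non-empty rows, and a short enumeration of the $\binom{8}{6} = 28$ subsets is required. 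For each choice, each column of $Q_i$ pins the corresponding $c_j$ down to one or two candidates, and the monotonicity $c_1 < \dots < c_6$ then fails in every case. For instance, in $W_6$ the constraints force $c_3 = 3$ while $c_2 \in \{4, 6\}$, contradicting $c_2 < c_3$; entirely analogous contradictions arise for $W_7$ and $W_8$.

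For $W_9$, the matrix differs from $W(Q_9)$ by shifting a single 1-entry one row upward (from position $(2, 6)$ to $(1, 6)$), so neither the identification $W_9 = W(Q_9)$ nor \Cref{p:wp_vert_wit} applies directly. I would establish the two required properties separately. Avoidance follows from the same row/column constraint style as above: the non-empty rows of $W_9$ are $\{1, 2, 3, 5, 6, 7\}$, forcing $(r_1, \dots, r_6) = (1, 2, 3, 5, 6, 7)$, after which the constraints from $Q_9$'s entries give $c_4 = 4$ and $c_3 \in \{5, 7\}$, contradicting $c_3 < c_4$. For expandability of row $4$, I would exhibit, for each column $u \in \{1, \dots, 10\}$, a $6 \times 6$ sub-matrix of $W_9$ with the added entry at $(4, u)$ that contains $Q_9$. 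When $u \in \{7, 8, 9, 10\}$, the first-copy argument from the proof of \Cref{p:wp_vert_wit} applies verbatim, since the perturbation lies outside the first copy; for $u \in \{1, \dots, 5\}$, the rows $\{1, 3, 4, 5, 6, 7\}$ (skipping the perturbed row $2$) together with the columns $\{u, 6, 7, 8, 9, 10\}$ work; and for $u = 6$, the rows $\{1, \dots, 6\}$ together with the columns $\{1, \dots, 6\}$ do the job.

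The main obstacle is that none of the four patterns admits an occurrence of $\Iprimetwomat$ of height $k-1$, so \Cref{p:almost-q1} is inapplicable and the clean $\Iprimetwomat$-based reasoning used earlier in this subsection is unavailable. Consequently the proof is inherently ad hoc and reduces to bookkeeping per pattern; the most delicate step will be the expandability analysis for $W_9$, because the asymmetric modification of $W(Q_9)$ breaks the symmetric two-copies structure that \Cref{p:wp_vert_wit} exploits, forcing a separate verification for every column $u$.
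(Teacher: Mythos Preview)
Your proposal is correct, and for $i\in\{6,7\}$ it coincides with the paper's argument (forced row assignment, then an immediate column contradiction). The route diverges for $i=8$ and $i=9$.

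For $W_8$, the paper avoids your $\binom{8}{6}$-case enumeration by a structural argument: $Q_8$ contains two occurrences of $\Iprimetwomat$ of height $4$ whose row-spans overlap, whereas in $W_8$ the only two height-$4$ occurrences of $\Iprimetwomat$ that do not straddle the empty row have disjoint row-spans; hence no embedding exists. For $W_9$, the paper again uses $\Iprimetwomat$: the unique height-$4$ occurrence in $Q_9$ would have to map to an occurrence of height $\ge 5$ in $W_9$, but the only such occurrences have their top entry in row~$1$, leaving no room for the first row of $Q_9$. Your forced-row argument for $W_9$ avoidance is actually shorter here, since $W_9$ has only six non-empty rows. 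For expandability of row~$4$ in $W_9$, the paper gives a one-line conceptual argument (only the \emph{topmost} entry of the right partial copy was moved up, so that copy is still completed by any added entry in the left half; the left copy is untouched, handling the right half), which subsumes your case split over $u$; in particular your separate treatment of $u=6$ is unnecessary, as the left-copy argument already covers it.

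In short: your approach is sound but more computational, especially the $28$-subset check for $W_8$. The paper's $\Iprimetwomat$-based reasoning trades that bookkeeping for a short structural observation, at the cost of being more ad hoc per pattern.
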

\begin{proof}
	For $i \in \{6,7,8\}$, we simply have $W_i = W(P_i)$. Thus, it suffices to show that $W_i$ avoids $Q_i$. For $i \in \{6,7\}$, note that $W_i$ has height $k+1$, and therefore only $k$ non-empty rows. As such, an occurrence of $Q_i$ in $W_i$ must map the topmost 1-entry of $Q_i$ to the topmost 1-entry of $W_i$ (marked red in the figure). It is easy to see that then the 1-entry in the second column of $Q_i$ must be mapped to the second column of $W_i$ (marked blue). But then the 1-entries in the second and third column of $Q_1$ cannot be correctly mapped to 1-entries in $W_i$.
	
	Considering $i = 8$, observe that $Q_8$ has two occurrences $I_1, I_2$ of $\Iprimetwomat$ of height $k-2 = 4$ (marked red/blue in the figure). All occurrences of $\Iprimetwomat$ in $W_8$ have height at most 4, and out of the occurrences of height 4, only two, say $I_1'$ and $I_2'$ (marked red/blue), do not contain the empty row. Thus, an occurrence of $Q_8$ in $W_8$ must map $I_1, I_2$ to $I_1', I_2'$. However, $I_1, I_2$ span overlapping rows, while $I_1', I_2'$ do not, a contradiction.
	
	Finally, consider $i = 9$. The matrix $W_9$ is almost equal to $W(Q_9)$; the only difference is that the entry in the 6-th column is moved one row up. Note that this entry is the highest 1-entry in the right partial copy of $Q_9$ in $W(Q_9)$. Since we only move the highest entry up, the right partial copy stays intact in some sense. In particular, adding a 1-entry in the left half of the $t$-th row will still complete an occurrence of $Q_9$. The same is true for the right half of the $t$-th row, since the left partial copy is not changed. Thus, the $t$-th row of $W_9$ is expandable.
	
	We still have to argue that $W_9$ does not contain $Q_9$. Suppose otherwise. Observe that $Q_9$ contains exactly one occurrence $I$ of $\Iprimetwomat$ of height 4 (marked in red in the figure). All occurrences of $\Iprimetwomat$ in $W_9$ of height at least 4 have the empty row in between their two entries, so $I$ must be mapped to the some occurrence $I'$ of $\Iprimetwomat$ in $W_9$ of height larger than 4. There are only two such occurrences (marked in red), both involving the entry in the sixth column of $W_9$. However, the top entry in $I'$ is in the first row of $W_9$, but the top entry of $I$ is in the second row of $Q_9$, leaving no room for the top entry of $Q_9$. This means $Q_9$ is not contained in $W_9$.
\end{proof}

\Cref{p:almost-q1,p:remaining-four-q0} show that each not-once-separable $Q_0$-like permutation matrix of size at most 6 has a vertical witness. As discussed at the start of this section, this implies \Cref{p:small-patterns}. For convenience, we list all not-once-separable $Q_0$-like permutation matrices of size at most 6 in \Cref{app:small-perms}.

\section{Conclusion}

Fulek and Keszegh \cite{FulekKeszegh2020} showed that the saturation function of once-separable patterns is linear. We extend their result by showing that all non-trivial $Q_1$-like patterns have bounded saturation function. In particular, this is another step towards the classification of permutation matrices, leaving only the $Q_0$-like permutation matrices. We find many more $Q_0$-like permutation matrices with bounded saturation function. This completes the classification of permutation matrices of size at most 6, showing that a permutation matrix of size at most 6 has linear saturation function if and only if it is once-separable. It seems possible that this is true for all permutation matrices.

\begin{openQuestion}
	Is the saturation function bounded for each not-once-separable permutation matrix?
\end{openQuestion}

Our witness construction $W(P)$ undoubtedly works for a larger class of matrices than we identified (cf.\ \Cref{p:remaining-four-q0}). However, we also provide an example of a not-once-separable $Q_0$-like permutation matrix ($Q_9$) for which our construction does \emph{not} yield a vertical witness. It would be interesting to precisely identify the patterns where the construction works.

\begin{openQuestion}
	Is there a simple characterization of patterns $P$ where $W(P)$ avoids~$P$?
\end{openQuestion}

Our results also extend to certain non-permutation matrices, but we did not consider matrices with empty rows or columns or with more than one 1-entry in either of the first or last row or column. We note, however, that \Cref{p:vert_wit_suff} still may be useful for patterns that have only one 1-entry in the the last row and only one 1-entry in the last column, but multiple 1-entries in the first row and column.

\pagebreak

\bibliography{info}{}
\bibliographystyle{alpha}

\newpage
\appendix

\section{Small permutation matrices}\label{app:small-perms}

The following table lists all not-once-separable and $Q_0$-like permutation matrices of size at most $6 \times 6$, up to reflection. For each matrix, we reference the proof that it has bounded saturation function. Whenever \Cref{p:almost-q1} is used, the relevant occurrence of $\Iprimetwomat$ is highlighted in red.

\bigskip

\begin{minipage}[c]{0.5\textwidth}
	\centering
	\begin{TAB}[4pt]{|c|c|}{|c|c|c|c|c|c|c|c|c|}
		$\begin{smallbulletmatrix}
		  &  &\r&  &  \\
		\o&  &  &  &  \\
		  &  &  &  &\o\\
		  &\r&  &  &  \\
		  &  &  &\o&  
		\end{smallbulletmatrix}$ & \Cref{p:almost-q1} ($Q_5$)\\
		$\begin{smallbulletmatrix}
		  &  &  &\r&  &  \\
		\o&  &  &  &  &  \\
		  &\o&  &  &  &  \\
		  &  &  &  &  &\o\\
		  &  &\r&  &  &  \\
		  &  &  &  &\o&  
		\end{smallbulletmatrix}$ & \Cref{p:almost-q1}\\
		$\begin{smallbulletmatrix}
		  &  &\o&  &  &  \\
		\o&  &  &  &  &  \\
		  &  &  &  &  &\o\\
		  &\o&  &  &  &  \\
		  &  &  &\o&  &  \\
		  &  &  &  &\o&  
		\end{smallbulletmatrix}$ & \Cref{p:remaining-four-q0} ($Q_6$)\\
		$\begin{smallbulletmatrix}
		  &  &\o&  &  &  \\
		\o&  &  &  &  &  \\
		  &  &  &  &\o&  \\
		  &\o&  &  &  &  \\
		  &  &  &  &  &\o\\
		  &  &  &\o&  &  
		\end{smallbulletmatrix}$ & \Cref{p:remaining-four-q0} ($Q_8$)\\
		$\begin{smallbulletmatrix}
		  &  &\o&  &  &  \\
		\o&  &  &  &  &  \\
		  &  &  &  &  &\o\\
		  &\o&  &  &  &  \\
		  &  &  &  &\o&  \\
		  &  &  &\o&  &  
		\end{smallbulletmatrix}$ & \Cref{p:remaining-four-q0} ($Q_7$)\\
		$\begin{smallbulletmatrix}
		  &  &  &\r&  &  \\
		\o&  &  &  &  &  \\
		  &  &  &  &  &\o\\
		  &\o&  &  &  &  \\
		  &  &\r&  &  &  \\
		  &  &  &  &\o&  
		\end{smallbulletmatrix}$ & \Cref{p:almost-q1}\\
		$\begin{smallbulletmatrix}
		  &  &\r&  &  &  \\
		\o&  &  &  &  &  \\
		  &  &  &\o&  &  \\
		  &  &  &  &  &\o\\
		  &\r&  &  &  &  \\
		  &  &  &  &\o&  
		\end{smallbulletmatrix}$ & \Cref{p:almost-q1}\\
		$\begin{smallbulletmatrix}
		  &  &\r&  &  &  \\
		\o&  &  &  &  &  \\
		  &  &  &  &  &\o\\
		  &  &  &\o&  &  \\
		  &\r&  &  &  &  \\
		  &  &  &  &\o&  
		\end{smallbulletmatrix}$ & \Cref{p:almost-q1}\\
		$\begin{smallbulletmatrix}
		  &  &\r&  &  &  \\
		\o&  &  &  &  &  \\
		  &  &  &  &\o&  \\
		  &  &  &  &  &\o\\
		  &\r&  &  &  &  \\
		  &  &  &\o&  &  
		\end{smallbulletmatrix}$ & \Cref{p:almost-q1}
	\end{TAB}
\end{minipage}
\begin{minipage}[c]{0.3\textwidth}
\centering
\begin{TAB}[4pt]{|c|c|}{|c|c|c|c|c|c|c|c|}
	$\begin{smallbulletmatrix}
		  &  &\r&  &  &  \\
		\o&  &  &  &  &  \\
		  &  &  &  &  &\o\\
		  &  &  &  &\o&  \\
		  &\r&  &  &  &  \\
		  &  &  &\o&  &  
	\end{smallbulletmatrix}$ & \Cref{p:almost-q1}\\
	$\begin{smallbulletmatrix}
		  &  &  &\r&  &  \\
		\o&  &  &  &  &  \\
		  &  &\o&  &  &  \\
		  &  &  &  &  &\o\\
		  &\r&  &  &  &  \\
		  &  &  &  &\o&  
	\end{smallbulletmatrix}$ & \Cref{p:almost-q1}\\
	$\begin{smallbulletmatrix}
		  &  &  &\r&  &  \\
		\o&  &  &  &  &  \\
		  &  &  &  &  &\o\\
		  &  &\o&  &  &  \\
		  &\r&  &  &  &  \\
		  &  &  &  &\o&  
	\end{smallbulletmatrix}$ & \Cref{p:almost-q1}\\
	$\begin{smallbulletmatrix}
		  &\o&  &  &  &  \\
		  &  &  &\o&  &  \\
		\o&  &  &  &  &  \\
		  &  &  &  &  &\o\\
		  &  &\o&  &  &  \\
		  &  &  &  &\o&  
	\end{smallbulletmatrix}$ & \Cref{p:remaining-four-q0} ($Q_9$)\\
	$\begin{smallbulletmatrix}
		  &\o&  &  &  &  \\
		  &  &  &  &\r&  \\
		\o&  &  &  &  &  \\
		  &  &  &  &  &\o\\
		  &  &\o&  &  &  \\
		  &  &  &\r&  &  
	\end{smallbulletmatrix}$ & \Cref{p:almost-q1}\\
	$\begin{smallbulletmatrix}
		  &\o&  &  &  &  \\
		  &  &  &\r&  &  \\
		\o&  &  &  &  &  \\
		  &  &  &  &  &\o\\
		  &  &  &  &\o&  \\
		  &  &\r&  &  &  
	\end{smallbulletmatrix}$ & \Cref{p:almost-q1}\\
	$\begin{smallbulletmatrix}
		  &\o&  &  &  &  \\
		  &  &  &  &\r&  \\
		\o&  &  &  &  &  \\
		  &  &  &  &  &\o\\
		  &  &  &\o&  &  \\
		  &  &\r&  &  &  
	\end{smallbulletmatrix}$ & \Cref{p:almost-q1}\\
	$\begin{smallbulletmatrix}
		  &  &\r&  &  &  \\
		  &  &  &  &\o&  \\
		\o&  &  &  &  &  \\
		  &  &  &  &  &\o\\
		  &\r&  &  &  &  \\
		  &  &  &\o&  &  
	\end{smallbulletmatrix}$ & \Cref{p:almost-q1}
\end{TAB}
\end{minipage}

\end{document}